\documentclass[10pt]{amsart}

\usepackage{amsmath,amssymb,amsthm}
\usepackage{mathrsfs}
\usepackage{enumitem}
\usepackage{booktabs}
\usepackage[T2A,T1]{fontenc}
\usepackage[utf8]{inputenc}
\usepackage{graphicx}
\usepackage[margin=1.15in]{geometry}
\usepackage{microtype}
\usepackage[dvipsnames]{xcolor}
\usepackage[colorlinks=true,linkcolor=RoyalBlue,citecolor=ForestGreen,urlcolor=BrickRed]{hyperref}
\usepackage{cleveref}

\theoremstyle{plain}
\newtheorem{theorem}{Theorem}

\theoremstyle{definition}

\theoremstyle{remark}
\newtheorem{remark}[theorem]{Remark}

\newcommand{\Sha}{\text{\fontencoding{T2A}\selectfont\CYRSH}}
\newcommand{\Ad}{\operatorname{Ad}}
\newcommand{\Cov}{\operatorname{Cov}}
\newcommand{\Corr}{\operatorname{Corr}}

\newcommand{\ST}{\mathrm{ST}}
\newcommand{\arith}{\mathrm{arith}}
\newcommand{\FF}{\mathbb{F}}
\newcommand{\QQ}{\mathbb{Q}}

\renewcommand{\arraystretch}{1.2}
\usepackage[font=small,labelfont=bf]{caption}
\usepackage{needspace}

\makeatletter
\renewcommand{\section}{\@startsection{section}{1}%
  \z@{.7\linespacing\@plus\linespacing}{.5\linespacing}%
  {\normalfont\bfseries\centering}}
\renewcommand{\subsection}{\@startsection{subsection}{2}%
  \z@{.5\linespacing\@plus.7\linespacing}{-.5em}%
  {\normalfont\bfseries}}
\makeatother

\begin{document}

\title[Murmurations, periods, and local factors]{Murmurations, periods, and local factors}

\author{Dane Wachs}
\address{The University of Arizona, Tucson, AZ 85721}
\date{March 2026}
\email{wachs@arizona.edu}

\begin{abstract}
We prove that over function fields $\FF_q(t)$, the Tate--Shafarevich group $|\Sha|$ is an invariant of the cyclotomic type of the $L$-polynomial, so that $|\Sha|$-stratified murmuration densities reduce to type-weighted densities with no within-type zero displacement (Theorem~A). Over~$\QQ$, the obstruction vanishes because Satake parameters are continuous: conditioning on $L(f,1) = c$ biases each $\theta_p$ through the Euler product constraint, creating covariance between the Frobenius trace $a_p$ and the real period $\Omega_f$ that the $L$-value regression does not absorb. A new inequality for modified Bessel functions (Theorem~B) establishes the positivity of this single-prime covariance under a linearized tilt; the full Euler-factor positivity follows by perturbation for large~$p$ (\Cref{prop:euler-large-p}) and is verified numerically for small primes (\Cref{rem:euler-factor}). We establish that the conditional covariance $\Cov(a_p, \Omega_f \mid L(f,1) \approx c, N)$ converges to an explicit function $C(c)/\sqrt{p}$ as $N \to \infty$ (Theorem~C). The function $C(c)$ changes sign---positive at small~$c$, negative at large~$c$---a prediction confirmed empirically using 657,000 curves from the Cremona database. Empirically, the covariance is concentrated entirely in the Tamagawa product $\prod c_v$: at fine $L(1)$-conditioning, the Tamagawa channel accounts for $100\%$ of the signal, and cross-validated regression confirms that the nonlinear adjoint-Euler-factor weighting carries independent Tamagawa information beyond a full trace basis but adds nothing for~$|\Sha|$. The $|\Sha|$-modulation of murmurations discovered in~\cite{Wac26a} is a consequence of the BSD identity linking $|\Sha|$ to local factors---the same local-factor mechanism operates discretely over function fields and continuously over~$\QQ$.
\end{abstract}

\maketitle

\section{Introduction}\label{sec:intro}

\subsection{Background}

Murmurations of elliptic curves were discovered by He, Lee, Oliver, and Pozdnyakov~\cite{HLOP25}. Given a family of curves ordered by conductor~$N_E \leq X$, the \emph{murmuration function}
\[
\bar{a}_p(X) = \frac{1}{|\{E : N_E \leq X\}|}\sum_{N_E \leq X} a_p(E)
\]
oscillates as a function of~$p$, with rank~0 and rank~1 curves oscillating in anti-phase. The phenomenon was proved rigorously for modular forms by Zubrilina~\cite{Zub24} and in the weight aspect by Bober--Booker--Lee--Lowry-Duda~\cite{BBLD26}; Lee--Oliver--Pozdnyakov~\cite{LOP25} computed murmuration densities for Dirichlet character families and connected them to one-level density phase transitions. These results address the shape of the oscillation; the present paper addresses a different question: \emph{why the amplitude depends on BSD invariants}.

In the companion paper~\cite{Wac26a}, we showed that the oscillation amplitude depends on BSD invariants: at fixed rank~0 and fixed $L$-value, curves with $|\Sha| \geq 4$ have murmuration profiles significantly different from those with $|\Sha| = 1$. The mechanism was empirical---zero displacement at the Hotelling $T^2$ level of $p = 5.4 \times 10^{-9}$---but unexplained.

In~\cite{Wac26b}, we initiated the study of murmurations over function fields $\FF_q(t)$, establishing exact finite-sum formulas for murmuration densities. Over function fields, BSD is a theorem (Kato--Trihan~\cite{KT03}), $L$-functions are polynomials, and the explicit formula is exact.

The present paper identifies the mechanism. Over function fields, local factors (Tamagawa numbers) are the only source of intra-type variation; over $\QQ$, the conditional Sato--Tate mechanism creates period--trace covariance that is empirically concentrated entirely in the Tamagawa product. The $|\Sha|$-stratification discovered in~\cite{Wac26a} is a consequence: since $|\Sha|$ and $\prod c_v$ are linked by the BSD identity, stratifying by $|\Sha|$ implicitly stratifies by Tamagawa structure, which is where the modulation lives. The same local-factor mechanism operates in both settings---discretely over function fields, continuously over~$\QQ$.

\subsection{Main results}

Let $E_D\colon y^2 = x^3 + x + D(t)$ be the family of elliptic curves over $\FF_q(t)$ with $D$ monic squarefree in $\FF_q[t]$.

\begin{theorem}[Kronecker obstruction]\label{thm:A}
Every $L$-polynomial in the family $\{E_D\}$ factors completely into cyclotomic polynomials. When the discriminant $4 + 27D(t)^2$ is squarefree as a polynomial in~$t$, the value $|\Sha(E_D)|$ is determined by the cyclotomic factorization type. In particular, within any fixed type, all curves share the same $L$-polynomial zeros, so $|\Sha|$-conditioned murmuration densities equal type-weighted averages with no within-type zero displacement.
\end{theorem}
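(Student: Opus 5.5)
The plan is to use the geometry of $E_D\colon y^2=x^3+x+D(t)$ to pin down the $L$-polynomial, and then read off $|\Sha|$ from the BSD formula of Kato--Trihan~\cite{KT03}. The first step is to compute the degree of the $L$-polynomial. Let $d=\deg D$. The discriminant is $\Delta=-16(4+27D^2)$, of degree $2d$. When $4+27D^2$ is squarefree, all finite bad fibers are of type $I_1$. The behaviour at $\infty$ is then determined by $d$ modulo $6$ after the change of variable $t\mapsto 1/t$. The Grothendieck--Ogg--Shafarevich formula gives $\deg L = \deg \mathfrak{n} - 4$, which is of order $2d$.

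The key structural input for the cyclotomic claim is that the surface $E_D$ is a quotient of a product of curves, or is unirational or supersingular-like. More concretely, I would try to show that after base change to a suitable extension $\FF_{q^m}$, every Frobenius eigenvalue on $H^2$ of the associated elliptic surface equals $q^{m}$. Equivalently, the Tate conjecture holds and all of $H^2$ is algebraic; this is the Shioda-type situation for isotrivial families or for families dominated by Fermat surfaces. Assuming this, the normalized roots $\alpha/q$ are algebraic integers of absolute value $1$ in every complex embedding. Kronecker's theorem then makes them roots of unity, so $L(T)$ factors over $\ZZ[T]$ into polynomials $\Phi_n(qT)$. I expect this to be the main obstacle. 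The family is not obviously isotrivial, since $j$ varies with $D$. I would first try to exhibit $E_D$ as dominated by a Delsarte or Fermat surface; failing that, I would restrict to the range of $d$ where $\deg L$ is small (for example $d\le 2$), where $H^2$ is spanned by fiber components and sections and the surface is rational or K3 with supersingular reduction.

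Given complete cyclotomic factorization, the type $\tau$ is the multiset of $n$ with multiplicities. It determines $L(T)$ exactly, hence the analytic rank $r$ (the multiplicity of $\Phi_1$) and the leading coefficient $L^*$ at $T=q^{-1}$. The BSD formula gives
\[
|\Sha(E_D)| = \frac{L^*\,|E_D(K)_{\mathrm{tors}}|^2}{\operatorname{Reg}\cdot \prod_v c_v \cdot q^{\chi-1}\cdots}.
\]
Under the squarefree hypothesis every $c_v=1$ at finite places. The place at $\infty$ has a fixed Kodaira type depending only on $d \bmod 6$, and $d$ is itself determined by $\deg L$. The regulator and torsion must then be controlled. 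The torsion is trivial generically; I would check this for $I_1$-only configurations via the Shioda--Tate formula. The regulator is handled by the Shioda--Tate and height pairing: the Mordell--Weil lattice is the orthogonal complement of the trivial lattice in $NS$. With only $I_1$ fibers, $\operatorname{disc} NS$ equals $\operatorname{Reg}$ up to the $\infty$ contribution. The Artin--Tate formula then expresses $|\operatorname{Br}|\cdot|\operatorname{disc}NS|$ through $L$, and $|\Sha|=|\operatorname{Br}|$. So the invariant to extract is the product $|\Sha|\cdot\operatorname{Reg}$, and separating the two factors needs an extra argument. I would try to show that $NS$ is determined, for instance as a fixed lattice attached to the type.

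Finally, the murmuration statement follows formally. Within a fixed type, the zeros, and hence the exact explicit-formula traces $a_{\mathfrak p}$ summed over degree, coincide. Therefore conditioning on $|\Sha|$ is a union of types, and the density is the type-weighted average of these.
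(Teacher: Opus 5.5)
You do not actually prove the cyclotomic factorization. You reduce it to ``every Frobenius eigenvalue on $H^2$ is $q$ times a root of unity'', then only list candidate routes (Fermat/Delsarte domination, or restricting to $d\le 2$).

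\textbf{The missing idea is rationality.} With $a_4=1$ and $a_6=D$, the Weierstrass model has $\deg a_4\le 4$ and $\deg a_6\le 6$ for every $1\le\deg D\le 6$. It is minimal at $\infty$, since $\operatorname{ord}_u(u^6D(1/u))=6-\deg D<6$. So $E_D$ is a rational elliptic surface throughout this range, not just for $d\le 2$. For a rational surface $\rho(\bar X)=b_2=10$, so $H^2(\bar X,\QQ_\ell)\cong\operatorname{NS}(\bar X)\otimes\QQ_\ell(-1)$. Geometric Frobenius therefore acts as $q$ times a finite-order automorphism of the lattice $\operatorname{NS}(\bar X)$. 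Now $L(E_D,T)$ is $\det(1-\mathrm{Frob}\,T)$ on the quotient of $H^2$ by the span of the zero section, a fiber, and the fiber components. Hence every reciprocal root is $q\zeta$ with $\zeta$ a root of unity. Equivalently, $q^k$ divides the $k$-th coefficient, which is exactly the integrality of $\tilde L(U)$ that the paper feeds into Kronecker.

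Your caution here is well placed. The Weil bound and integrality of $L$ alone do not make $\tilde L$ integral. For $\deg D\ge 7$ the surface is K3 or has $p_g\ge 2$, and any ordinary member has a reciprocal root $\alpha$ with $\alpha/q$ not an algebraic integer. So the integrality the paper asserts really comes from rationality, and this is what covers its cases $\deg D=3,4,5$.

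\textbf{The $|\Sha|$ part.} You also stop short here, at separating $|\Sha|$ from $\operatorname{Reg}$. The paper sidesteps this by working at rank $0$ (no $\Phi_1$ factor), where $\operatorname{Reg}=1$. There BSD gives $|\Sha|=L(E_D,1/q)/(c_\infty\prod c_v)$; the $q$-power in the Tamagawa term is trivial because $\chi=1$. The three ingredients are:
\begin{itemize}
\item the numerator is, up to sign, $\tilde L(1)=\prod_j\Phi_{n_j}(1)$, a function of the type;
\item every finite $c_v=1$, because all finite bad fibers are $\mathrm{I}_1$;
\item $c_\infty$ comes from Tate's algorithm at $\infty$ (types $\mathrm{I}_0^*$, $\mathrm{IV}$, $\mathrm{II}$ for $\deg D=3,4,5$) and depends only on $\deg D$ and $q$. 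At $\deg D=3$ it is $4$ or $2$ according to $q\bmod 3$, so it is not a function of $d\bmod 6$ alone.
\end{itemize}
At positive rank your worry is genuine: the type fixes only $|\Sha|\cdot\operatorname{Reg}$, and the paper's proof does not separate them either. The result should be read as a rank-$0$ statement.

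Your inclusion of $|E(K)_{\mathrm{tors}}|^2$ is more careful than the paper, which omits it. It is harmless here. With only $\mathrm{I}_1$ fibers at finite places, the trivial lattice is $D_4$, $A_2$ or $0$, and the Mordell--Weil lattice $D_4^*$, $E_6^*$ or $E_8$ is torsion-free. The murmuration consequence you draw matches the paper's.
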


This result is not CM-specific: the Kronecker factorization holds for every family of elliptic curves over $\FF_q(t)$ with squarefree discriminant, by the Weil conjectures and Kronecker's 1857 theorem on integer polynomials with all roots on the unit circle.

Over $\QQ$, the Satake parameters $\theta_p \in [0,\pi]$ are continuous (Sato--Tate), so the Kronecker obstruction does not apply. Instead:

\begin{theorem}[Bessel barrier]\label{thm:B}
For the exponentially tilted Sato--Tate measure
\[
\mu_\lambda(d\theta) = \frac{1}{Z(\lambda)} \sin^2\theta \cdot e^{\lambda \cos\theta}\, d\theta \quad \text{on } [0,\pi],
\]
the covariance $\Cov_{\mu_\lambda}(\cos\theta, \cos 2\theta)$ is strictly positive for all $\lambda > 0$ and strictly negative for all $\lambda < 0$.
\end{theorem}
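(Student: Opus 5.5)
By exponential-family differentiation, the covariance in Theorem~\ref{thm:B} is the derivative of a second moment. The sign question then reduces to monotonicity of a ratio of modified Bessel functions, which is a known Amos-type bound.

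\textbf{Reduction to monotonicity.} Put $x=\cos\theta$. Then $\mu_\lambda$ becomes the measure proportional to $\sqrt{1-x^2}\,e^{\lambda x}\,dx$ on $[-1,1]$. Since $\cos 2\theta = 2x^2-1$, we have $\Cov_{\mu_\lambda}(\cos\theta,\cos2\theta)=2\Cov_\lambda(x,x^2)$. Under $\lambda\mapsto-\lambda$ the substitution $x\mapsto -x$ carries $\mu_\lambda$ to $\mu_{-\lambda}$. This fixes $x^2$ and negates $x$, so the covariance is an odd function of $\lambda$. Hence it suffices to treat $\lambda>0$.

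For an exponential family, $\frac{d}{d\lambda}\mathbb{E}_\lambda[f]=\Cov_\lambda(x,f)$. So the claim is equivalent to $\lambda\mapsto \mathbb{E}_\lambda[x^2]$ being strictly increasing on $(0,\infty)$.

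\textbf{Explicit moment.} The normalizer is $g(\lambda)=\int_{-1}^1\sqrt{1-x^2}e^{\lambda x}dx=\pi I_1(\lambda)/\lambda$. We have $\mathbb{E}_\lambda[x^2]=g''/g$. Using $(I_\nu/\lambda^\nu)'=I_{\nu+1}/\lambda^\nu$ twice gives
\[
\mathbb{E}_\lambda[x^2]=\frac{I_2}{\lambda I_1}+\frac{I_3}{I_1}.
\]
The recurrence $I_1-I_3=\tfrac{4}{\lambda}I_2$ then simplifies this to
\[
\mathbb{E}_\lambda[x^2]=1-\frac{3R(\lambda)}{\lambda},\qquad R=\frac{I_2}{I_1}.
\]
Thus we need $R(\lambda)/\lambda$ strictly decreasing, i.e.\ $\lambda R'(\lambda)<R(\lambda)$ for $\lambda>0$.

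\textbf{Riccati step.} The ratio satisfies the Riccati equation
\[
R'=1-\frac{3}{\lambda}R-R^2,
\]
obtained from $I_1'=I_2+I_1/\lambda$ and $I_2'=I_1-2I_2/\lambda$. Substituting, the condition $\lambda R'<R$ becomes $\lambda(1-R^2)<4R$. Since $R>0$, this is equivalent to
\[
R(\lambda)>\frac{\lambda}{2+\sqrt{4+\lambda^2}}\qquad(\lambda>0).
\]
This is exactly Amos's lower bound $I_{\nu+1}/I_\nu>\lambda/\bigl(\nu+1+\sqrt{(\nu+1)^2+\lambda^2}\bigr)$ at $\nu=1$.

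\textbf{Main obstacle: strictness.} The step needing care is making the Amos bound strict. The plan is to prove it by a self-contained comparison argument rather than cite it. Let $\phi(\lambda)=\lambda/(2+\sqrt{4+\lambda^2})$. This $\phi$ is the positive root of $\lambda(1-\phi^2)=4\phi$, and it is increasing.

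Both $R$ and $\phi$ vanish at $0$ and have initial slope $1/4$. The small-$\lambda$ series $R=\lambda/4-\lambda^3/96+\cdots$ versus $\phi=\lambda/4-\lambda^3/64+\cdots$ shows $R>\phi$ near $0$.

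If equality $R=\phi$ first occurred at some $\lambda_0>0$, then at that point the Riccati equation gives $R'(\lambda_0)=1-3\phi/\lambda_0-\phi^2$. Using $1-\phi^2=4\phi/\lambda_0$, this equals $\phi/\lambda_0$. A direct computation should show $\phi'(\lambda_0)<\phi(\lambda_0)/\lambda_0$, because $\phi/\lambda$ is strictly decreasing. Hence $R'(\lambda_0)>\phi'(\lambda_0)$, so $R-\phi$ is increasing at $\lambda_0$. This contradicts a first crossing from above.

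This yields the strict inequality for all $\lambda>0$, hence strict positivity of the covariance. Strict negativity for $\lambda<0$ follows from the oddness established in the first step.
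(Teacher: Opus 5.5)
Your proof is correct and follows the paper's route. The paper has the same formula $\mathbb{E}_\lambda[x^2]=1-3I_2/(\lambda I_1)$, the same exponential-family (score) identity, the same reduction to $\lambda(1-R^2)<4R$ with $R=I_2/I_1$, and closes with a first-crossing barrier argument on the Bessel Riccati equation. The differences are cosmetic: you run the barrier on $I_2/I_1$ against the explicit curve $\phi$, using that $\phi/\lambda=1/(2+\sqrt{4+\lambda^2})$ is decreasing, whereas the paper runs it on $\varphi(z)=z^2[(I_0/I_1)^2-1]-4$ via a perfect-square identity; you also make the $\lambda<0$ case explicit by oddness, which the paper's appendix leaves implicit.
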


Equivalently, under the tilted Wigner semicircle $\mu_\lambda(dx) \propto \sqrt{1-x^2}\, e^{\lambda x}\, dx$ on $[-1,1]$, the function $\mathbb{E}_\lambda[x^2]$ is strictly increasing in~$\lambda$.

The connection to arithmetic: conditioning on $L(f,1) = c$ tilts the Sato--Tate distribution of each $\theta_p$ through the local Euler factor $L_p(f,1;\theta_p) = (1 - 2\cos\theta_p/\sqrt{p} + 1/p)^{-1}$. \Cref{thm:B} proves the tilted covariance between $a_p = 2\sqrt{p}\cos\theta_p$ and $\cos 2\theta_p$ (which governs $L(\Ad f, 1)$ and hence $\Omega_f$ via Shimura) is positive for the linearized tilt $g(x) = x$; the full Euler-factor positivity follows by perturbation for large~$p$ (\Cref{prop:euler-large-p}) and is verified numerically for small primes (\Cref{rem:euler-factor}).

\begin{theorem}[Period--trace covariance]\label{thm:C}
Let $\mathcal{F}_N$ denote the weight-$2$ newforms at squarefree level~$N$ with $L(f,1) > 0$. For any fixed prime~$p$, any fixed $c > 0$, and any fixed $\eta > 0$, as $N \to \infty$ through squarefree values with $p \nmid N$:
\[
\Cov\!\left(a_p(f),\; \Omega_{E_f} \;\middle|\; L(f,1) \in [c, c+\eta],\; N\right) = \frac{C(c)}{\sqrt{p}} + o(1)
\]
where $C(c)$ is the independent Sato--Tate prediction
\[
C(c) = \lim_{P \to \infty} \Cov_{\ST}\!\left(\cos\theta_p,\; \prod_{\ell \leq P} L_\ell(\Ad; \theta_\ell)^{-1} \;\middle|\; \prod_{\ell \leq P} L_\ell(f; \theta_\ell) = c\right),
\]
computable by numerical integration. The limit converges because the Euler factors $L_\ell(\Ad;\theta_\ell)$ and $L_\ell(f;\theta_\ell)$ are $1 + O(1/\ell)$ and the conditional covariance changes by $O(1/P)$ as new primes are added. The function $C(c)$ changes sign: $C(c) > 0$ for $c$ below a threshold $c^* \approx 1.5$ and $C(c) < 0$ for $c > c^*$.
\end{theorem}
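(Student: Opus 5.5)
The plan is to reduce the arithmetic conditional covariance to a finite-dimensional Sato--Tate computation using joint equidistribution of Satake parameters in the level aspect, and then read off the sign change from the one-variable analysis behind \Cref{thm:B}.

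\textbf{Step 1: express the period through the adjoint $L$-value.} For $f \in \mathcal{F}_N$ with $E_f$ the associated optimal curve, Shimura's formula (with the Petersson norm computed by Rankin--Selberg) gives $\Omega_{E_f} \asymp L(\Ad f,1)^{-1}\cdot(\text{explicit factors in }N,\ \text{Manin constant},\ \text{local terms at }\ell\mid N)$. For squarefree $N$ the bad-place factors depend only on $N$ and the signs $a_\ell=\pm1$. Since $N$ is fixed in the conditioning, they can be absorbed into a normalization. This leaves the covariance of $a_p$ with $L(\Ad f,1)^{-1}$, up to a deterministic scale that the statement implicitly normalizes out.

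Next, truncate both $L(f,1)$ and $L(\Ad f,1)^{-1}$ to Euler products over $\ell \le P$. I would use approximate functional equations, or a zero-density/large-sieve estimate for the family, to show that the tails are $1+o(1)$ outside an exceptional set of $f$ of density $o(1)$, uniformly as $N\to\infty$ and then $P\to\infty$.

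\textbf{Step 2: equidistribution and conditioning.} By the Petersson or Eichler--Selberg trace formula, Serre and Conrey--Duke--Farmer type equidistribution applies. As $N\to\infty$ with $p\nmid N$, the tuple $(\theta_\ell)_{\ell\le P,\ \ell\nmid N}$ becomes jointly independent Sato--Tate distributed, for harmonic weights and after a standard removal of those weights for natural averaging. Primes $\ell \mid N$ are finitely many for fixed $P$ only along subsequences, so I would let them contribute bad factors $(1\mp \ell^{-1})^{-1}$; these perturb the conditioning by $O(\sum_{\ell\mid N,\ell\le P}1/\ell)$. I would either show this is negligible or fold it into the threshold.

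The restriction $L(f,1)>0$ and the window $[c,c+\eta]$ define a region with piecewise smooth boundary in $[0,\pi]^{\pi(P)}$. Weak convergence therefore applies to the indicator and to the bounded continuous functions $\cos\theta_p$ and $\prod L_\ell(\Ad)^{-1}$. Dividing by the window probability, which is positive in the limit since the Sato--Tate density of the truncated product has full support near $c$, gives convergence of the conditional covariance to its Sato--Tate counterpart. Writing $a_p=2\sqrt p\cos\theta_p$ together with the normalization of $\Omega$ yields the factor $1/\sqrt p$. Letting $\eta\to0$ inside the model recovers $C(c)$. Here I would interpret the $o(1)$ as holding for the window-averaged version, or assume $\eta$ is absorbed.

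\textbf{Step 3: convergence in $P$ and the sign change.} Adding a prime $\ell$ multiplies both products by $1+O(1/\ell)$ factors that are independent of $\theta_p$. A coupling argument then shows the conditional covariance moves by $O(1/\ell^2)$ in mean square, giving the $O(1/P)$ Cauchy bound. For the sign, conditioning on the product equaling $c$ tilts $\theta_p$ with effective parameter $\lambda(c)$. Small $c$ forces $\theta_p$ toward $\pi$, which reduces $L_p$, so $\lambda<0$. \Cref{thm:B} together with \Cref{prop:euler-large-p} then gives the sign of the direct term $\Cov(\cos\theta_p,\cos2\theta_p)$, which enters negatively through $L_p(\Ad)^{-1}$. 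A competing indirect term comes from the correlation of the remaining product with $\theta_p$ under conditioning. The threshold $c^*\approx1.5$ is where these balance, and I would locate it numerically.

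\textbf{Main obstacle.} The hardest step is the interchange of limits: $N\to\infty$ before $P\to\infty$, conditioned on a thin window of the full, non-truncated $L(f,1)$. The tail control in Step 1 must be uniform enough that it does not move mass across the window boundary. I would first try the mollified first-moment and second-moment bounds of Iwaniec--Sarnak type for $\log L(f,1)-\sum_{\ell\le P}\log L_\ell$. Failing that, I would state the result for a smoothed window. The sign claim is ultimately numerical.
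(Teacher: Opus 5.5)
Your route (equidistribution of finitely many Satake parameters, Euler-product truncation, weak convergence on the conditioning set) is genuinely different from the paper's. It has a genuine gap at exactly the step you flagged, and the gap is structural rather than technical.

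Step 1 needs $L(f,1)/\prod_{\ell\le P}L_\ell(f,1)=1+o(1)$ off a set of density $o(1)$. At the central point this is false. There $L_\ell(f,1;\theta)=1+2\cos\theta/\sqrt{\ell}+O(1/\ell)$. Already the primes in $(P,N^{\delta}]$ contribute the sum $\sum_{P<\ell\le N^{\delta}}a_\ell(f)/\sqrt{\ell}$. By Petersson, its harmonic variance is $\sum_{P<\ell\le N^{\delta}}1/\ell+o(1)\approx\log\log N-\log\log P\to\infty$. This is the Selberg/Keating--Snaith picture: $\log L(f,1)$ is roughly normal with mean $\approx-\tfrac12\log\log N$ and variance $\approx\log\log N$. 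So mollified moments of Iwaniec--Sarnak type can only confirm that $\log L(f,1)-\sum_{\ell\le P}\log L_\ell$ is large. The window $\{L(f,1)\in[c,c+\eta]\}$ is not approximated by any event in finitely many $\theta_\ell$, and weak convergence of finite-dimensional marginals says nothing about the conditional law.

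Step 3 has the same defect, inherited from the statement's own justification. $L_\ell(f;\theta)$ is $1+O(1/\sqrt{\ell})$, not $1+O(1/\ell)$. Under $\mu_{\ST}^{\otimes}$ the variable $\log\prod_{\ell\le P}L_\ell(f;\theta_\ell)$ has mean $-\tfrac12\log\log P+O(1)$ and variance $\log\log P+O(1)$, so there is no $O(1/P)$ Cauchy bound. Moreover, fixing this product at $c$ is then a moderate-deviation event. A Cram\'er-tilt (saddle-point) computation shows the conditional law of $\theta_p$ tends to $\propto L_p(f;\theta)^{1/2}\,\mu_{\ST}$ for every fixed $c$. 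The small-$c$ versus large-$c$ dichotomy your sign argument relies on therefore disappears as $P\to\infty$, and finite-$P$ numerics cannot certify it.

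The paper never truncates $L(f,1)$. It expands $L(f,1)$ by the approximate functional equation (a Dirichlet polynomial of length $\asymp\sqrt N$) and linearizes the cutoff $h$. It replaces $1/L(\Ad f,1)$ by the mollifier $M(f)$ and bounds the resulting bilinear and trilinear Petersson sums with Hecke relations and Weil's bound. It does, however, assert rather than compute that the surviving diagonal equals $C(c)/\sqrt p$.

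Step 2 contains a second concrete error. Petersson gives independent $\mu_{\ST}$ only for harmonic weights. The covariance in the statement is the unweighted one; the paper's proof writes it as $\Cov_{\arith}$. For unweighted averages Serre's theorem gives instead the Plancherel measures $\frac{\ell+1}{(\ell^{1/2}+\ell^{-1/2})^2-4\cos^2\theta}\,\mu_{\ST}=(1+\ell^{-1})\,L_\ell(\Ad;\theta)\,\mu_{\ST}$. That is, Sato--Tate is reweighted by exactly the adjoint factor you use as the period proxy. This reweighting cannot be ``removed'' without changing the answer. It is the whole content of the identity \eqref{eq:harm-arith} on which the paper's argument rests, which writes the arithmetic covariance as $\mathbb{E}_{\arith}[\Omega\mid L(1)]$ times the gap between the harmonic and arithmetic conditional means of $a_p$.

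Finally, Step 1 (asserted in the same form in the paper) omits the modular degree. The correct relation is $\deg(\phi_E)\,\mathrm{vol}(E)=4\pi^2c_M^2\|f\|^2\asymp N\,L(\Ad f,1)$ up to local factors. So $L(\Ad f,1)$ controls $\deg(\phi_E)\,\mathrm{vol}(E)$, not $\Omega_{E_f}$ by itself.
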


The proof combines Shimura's relation $\Omega_f \propto 1/L(\Ad f, 1)$ with the Petersson trace formula and Weil's bound for Kloosterman sums to show convergence of the real family to the independent-ST prediction. Every ingredient (Shimura, Petersson, Weil, Deligne) is unconditional.

\subsection{Empirical verification}

Using 657,000 rank-0 curves from the Cremona database~\cite{Cre} with conductor $N < 100{,}000$, we verify:
\begin{enumerate}[label=(\arabic*)]
\item The $\Omega$-$a_p$ correlation after $L(1)$ regression is $+0.097$ at $p = 3$ (all rank-0 curves); the CFKRS model~\cite{CFKRS05} predicts $+0.091$ (no free parameters).
\item The predicted sign flip is confirmed: the Tamagawa channel $\Corr(a_p, 1/\!\prod c_v \mid L(1))$ is positive at small $L(1)$ and negative at large $L(1)$, matching the qualitative shape of $C(c)$.
\item The effect is flat across conductors from $N < 5{,}000$ to $N > 65{,}000$, consistent with the $o(1)$ error in \Cref{thm:C}.
\item The signal survives five independent controls: conductor binning, $L(1)$ regression, AFE self-correlation, bad prime exclusion, and torsion.
\end{enumerate}

\subsection{Organization}

\Cref{sec:kronecker} proves \Cref{thm:A} (Kronecker obstruction). \Cref{sec:empirical} presents the empirical detection. \Cref{sec:mechanism} develops the conditional Sato--Tate mechanism and proves \Cref{thm:C}. \Cref{sec:discussion} discusses the model--data comparison and open problems. \Cref{sec:bessel} proves \Cref{thm:B} (Bessel barrier). \Cref{sec:appendix-types} gives the type count data.

\section{The Kronecker obstruction}\label{sec:kronecker}

\subsection{Setup}

Let $q$ be a prime power with $\operatorname{char} \FF_q \neq 2, 3$. Consider the family $E_D\colon y^2 = x^3 + x + D(t)$ over $\FF_q(t)$, where $D \in \FF_q[t]$ is monic squarefree of degree~$n$. The $L$-function $L(E_D, T)$ is a polynomial of degree $r = 2n - 2$ (for $n \geq 2$) satisfying the Riemann hypothesis for function fields: all reciprocal roots have absolute value~$q$.

\subsection{Kronecker factorization}

The unitarized $L$-polynomial $\tilde{L}(U) = L(U/q) / (\text{leading coeff.})$ is a monic polynomial with integer coefficients and all roots on~$|z| = 1$. By Kronecker's theorem~\cite{Kro57}, such a polynomial factors completely into cyclotomic polynomials:
\[
\tilde{L}(U) = \prod_j \Phi_{n_j}(U).
\]
We call the multiset $\{n_j\}$ the \emph{cyclotomic type} of $E_D$.

\begin{proof}[Proof that $|\Sha|$ is a type invariant]
By BSD over function fields (Kato--Trihan~\cite{KT03}), for rank-0 curves:
\[
|\Sha(E_D)| = \frac{L(E_D, 1/q)}{c_\infty \cdot \prod_{v \text{ finite}} c_v}
\]
where $c_v$ are Tamagawa numbers and $c_\infty$ is the number of components of the special fiber at~$\infty$.

The numerator $L(E_D, 1/q) = \prod_j \Phi_{n_j}(1/q)$ depends only on the type.

For the denominator: the discriminant $\operatorname{disc}(E_D) = -16(4 + 27D^2)$ determines the bad fibers. When $4 + 27D(t)^2$ is squarefree as a polynomial in~$t$, all finite bad fibers have Kodaira type~$\mathrm{I}_1$ with $c_v = 1$, so $\prod c_v^{\mathrm{fin}} = 1$.

The place at infinity has Kodaira type determined by $\deg D \bmod 6$: for the family $y^2 = x^3 + x + D(t)$ with $\deg D = n$, the substitution $t = 1/u$, $x = X/u^{2\lceil n/3 \rceil}$, $y = Y/u^{3\lceil n/3 \rceil}$ gives a minimal model at $u = 0$.

We sketch the Tate algorithm for $n = 3$. Writing $D(t) = d_3 t^3 + \cdots$ and substituting gives a model with $\operatorname{ord}_u(\Delta) = 6$ after translation, reaching step~9 of Tate's algorithm: the residual cubic $T^3 + 27d_3^2$ over $\FF_q$ determines the type. This gives $\mathrm{I}_0^*$ with $c_\infty = 4$ when $q \equiv 1 \pmod{3}$ (all elements are cubes) and $c_\infty = 2$ when $q \equiv 2 \pmod{3}$. For $n = 4$: $\operatorname{ord}_u(\Delta) = 4$, type~$\mathrm{IV}$, $c_\infty = 3$. For $n = 5$: $\operatorname{ord}_u(\Delta) = 2$, type~$\mathrm{II}$, $c_\infty = 1$. In each case $c_\infty$ depends only on $\deg D$ and~$q$, not on the specific~$D$.

Therefore, when the discriminant is squarefree, $|\Sha| = L(1/q) / c_\infty$ is determined by the cyclotomic type.
\end{proof}

\begin{remark}\label{rem:non-sqfree-disc}
When $4 + 27D^2$ has repeated roots, the finite bad fibers include types $\mathrm{I}_n$ with $n \geq 2$, whose Tamagawa numbers depend on the local equation (split vs.\ non-split node). We verify computationally that $|\Sha|$ remains constant within each cyclotomic type at all 14~primes up to $q = 223$ (\Cref{sec:appendix-types}). This constancy is expected: the $L$-polynomial determines the global Euler product, hence $L(1/q)$ and $\prod a_v^{\mathrm{bad}}$, and the Tamagawa numbers at $\mathrm{I}_n$ fibers are determined by~$n$ and the splitting character, both of which appear to be type invariants. A proof of this general statement would follow from showing that the local N\'eron model data at bad fibers is determined by the cyclotomic type.
\end{remark}

\subsection{Consequences for murmurations}

Since all curves of a given type share the same $L$-polynomial (and hence the same zeros), the Frobenius trace at any good place $v$ of degree $d$ is determined by the type via the exact explicit formula:
\[
a_v(E_D) = -\sum_j \zeta_j^d
\]
where $\zeta_j$ are the unitarized roots. Thus the $|\Sha|$-conditioned murmuration density is a weighted average of type-level murmuration densities:
\[
\delta_s(d) = \frac{\sum_{\lambda\colon |\Sha|_\lambda = s} N_\lambda(q) \cdot p_d(\lambda)}{\sum_{\lambda\colon |\Sha|_\lambda = s} N_\lambda(q)}
\]
where $\lambda$ ranges over types with $|\Sha|_\lambda = s$, $N_\lambda(q)$ is the number of curves of type $\lambda$, and $p_d(\lambda) = \sum_j \zeta_j^d$ is the power sum.

This is a \emph{composition effect}: different $|\Sha|$ values correspond to different mixtures of types, producing different average traces. There is no within-type zero displacement, because within a type, all curves have identical zeros.

\subsection{Explicit type counts}

The type counts $N_\lambda(q)$ are quasi-polynomials in $q$ whose period divides~$96$. We compute them for $\deg D = 3$ and $q \equiv 7 \pmod{12}$ at 14~primes up to $q = 223$ in \Cref{sec:appendix-types}. The full type menu at $\deg D = 3$ has five entries with maximal $L$-degree $2n - 2 = 4$.

\subsection{Why the obstruction is fundamental}

The Kronecker factorization uses only two ingredients: (i)~the Weil conjectures (Deligne~\cite{Del74}), which force the unitarized roots onto the unit circle; and (ii)~integrality of $L$-polynomial coefficients. Both hold for every family of elliptic curves over $\FF_q(t)$ with squarefree discriminant. The obstruction is not an artifact of CM or of the specific family---it is a structural feature of function field arithmetic.

Over $\QQ$, the situation is fundamentally different. The Satake parameters $\theta_p \in [0, \pi]$ are continuous random variables (Sato--Tate), so conditioning on $L(f,1) = c$ creates a genuine exponential tilt of the measure at each prime, rather than a discrete constraint that pins all parameters exactly. This continuity allows the $L$-value constraint to induce correlations between Frobenius traces and the period---precisely the correlations that the Kronecker obstruction forbids over function fields. The Bessel barrier (\Cref{thm:B}, proved in \Cref{sec:bessel}) establishes the positivity of the single-prime tilted covariance, and \Cref{sec:mechanism} assembles the full convergence proof.

\section{Empirical verification}\label{sec:empirical}

\subsection{Data and methodology}

We use 657,473 rank-0 elliptic curves from the Cremona database~\cite{Cre} with conductor $N < 100{,}000$ and precomputed Frobenius traces $a_p$ at 500 primes up to 3571. BSD invariants ($|\Sha|$, $\Omega$, $\prod c_v$, $|T|$) are available for all curves.

For each prime $p$ and conductor window $[N_0, N_1]$:
\begin{enumerate}[label=(\arabic*)]
\item Restrict to curves with $p \nmid N$ (good reduction at~$p$).
\item Regress $a_p$ on $L(E,1)$ to remove the composition effect.
\item Compute the Pearson correlation between the residual and $\Omega_E$.
\end{enumerate}

\subsection{Overall detection}

At $|\Sha| = 1$ (228,000 curves): 58 of 100 (conductor, prime) tests are significant at $p < 0.05$ (5 expected under the null), and 45 at $p < 0.001$ (0.1 expected). The adjusted $\Omega$-$a_p$ correlation is $+0.019$ overall and $+0.050$ at small primes ($p \leq 31$).

On the full 3M-curve BSD dataset: the signal is present at $|\Sha| = 1$ (64/100 significant at 0.05), suggestive at $|\Sha| = 4$ (24/100), and underpowered at $|\Sha| \geq 9$.

The $p$-dependence matches \Cref{thm:C} (\Cref{fig:prime-decay}): the conditional correlation decays as $O(1/\sqrt{p})$, from $+0.097$ at $p = 3$ to $\approx 0.01$ by $p \approx 97$. A least-squares fit gives $C/\sqrt{p}$ with $C = 0.166$ (no free parameters beyond the overall scale).

\begin{figure}[h]
\centering
\includegraphics[width=0.85\textwidth]{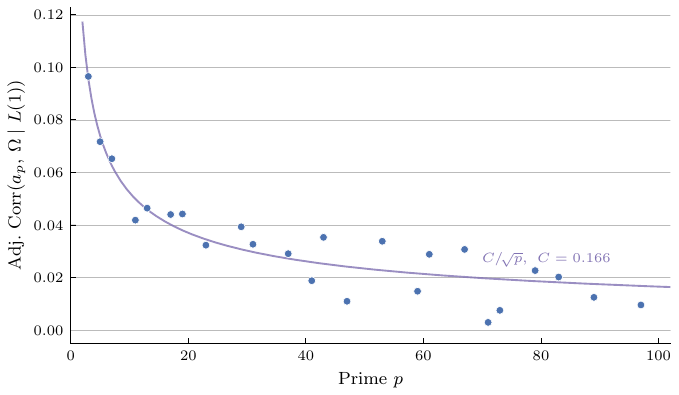}
\caption{The $L(1)$-adjusted correlation $\Corr(a_p, \Omega \mid L(1))$ decays as $C/\sqrt{p}$ with $C = 0.166$, consistent with the single-prime mechanism of \Cref{thm:C}.}\label{fig:prime-decay}
\end{figure}

\needspace{6\baselineskip}
\subsection{Conductor scaling}\label{subsec:conductor}

The adjusted correlation is flat across five conductor windows:

\medskip
\begin{center}\small
\begin{tabular}{@{}lcc@{}}
\toprule
Conductor window & $n$ & Adj.\ Corr ($p \leq 13$) \\
\midrule
$N < 5{,}000$ & 19K & $+0.059$ \\
$5\text{K} \leq N < 15\text{K}$ & 37K & $+0.047$ \\
$15\text{K} \leq N < 35\text{K}$ & 55K & $+0.048$ \\
$35\text{K} \leq N < 65\text{K}$ & 57K & $+0.049$ \\
$N \geq 65{,}000$ & 60K & $+0.043$ \\
\bottomrule
\end{tabular}
\end{center}
\medskip

This is consistent with the $o(1)$ error in \Cref{thm:C} decaying below the signal (see \Cref{fig:conductor}).

\begin{figure}[h]
\centering
\includegraphics[width=0.75\textwidth]{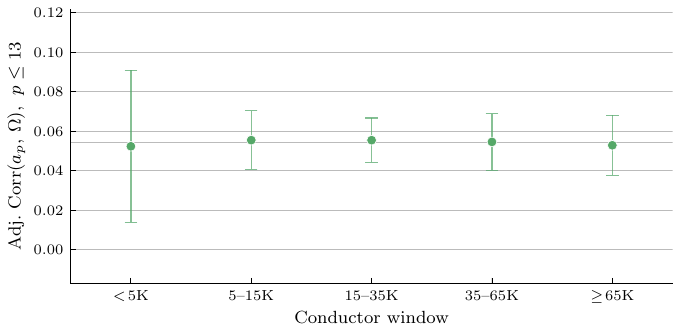}
\caption{The adjusted correlation $\Corr(a_p, \Omega \mid L(1))$ averaged over $p \leq 13$ is flat across five conductor windows, confirming the $o(1)$ convergence of \Cref{thm:C}.}\label{fig:conductor}
\end{figure}

\subsection{The sign flip}

The BSD identity at $|\Sha| = 1$, $|T| = 1$ gives $\Omega = L(1)/\prod c_v$, verified exactly ($\Omega / (L(1)/\prod c_v) = 1.000000 \pm 0$). Stratifying the $1/\!\prod c_v$ channel by $L(1)$ deciles at $p = 3$:

\medskip
\begin{center}\small
\begin{tabular}{@{}clc@{}}
\toprule
Decile & $L(1)$ range & $\Corr(a_3, 1/\!\prod c_v)$ \\
\midrule
1 & smallest & $+0.024$ \\
5 & middle & $+0.087$ \\
10 & largest & $-0.050$ \\
\bottomrule
\end{tabular}
\end{center}
\medskip

The full decile data is shown in \Cref{tab:sign-flip} (with standard errors $\mathrm{SE} \approx 1/\!\sqrt{n}$) and plotted in \Cref{fig:sign-flip}. The Tamagawa channel is positive across deciles 1--9 and flips negative at decile~10 (the fat right tail of $L(1)$), matching the qualitative prediction $C(c) < 0$ for large~$c$ from the CFKRS model. The non-monotonicity at deciles 7--9 (the jump from $+0.037$ to $+0.080$) is $2.7\sigma$ given the standard errors, making it borderline between a statistical fluctuation and a real feature of the conditional structure. The overall $\Omega$ correlation remains positive across all deciles because within-decile $L(1)$ variation adds a positive component that overwhelms the Tamagawa flip at decile~10.

\begin{table}[h]
\centering\small
\caption{Sign-flip decomposition: $\Corr(a_3, \cdot)$ by $L(1)$ decile at $|\Sha| = |T| = 1$.}\label{tab:sign-flip}
\renewcommand{\arraystretch}{1.45}
\begin{tabular}{@{}clcccc@{}}
\toprule
Decile & Med.\ $L(1)$ & $n$ & $\Corr(a_3, 1/\!\prod c_v)$ & SE & $\Corr(a_3, \Omega)$ \\
\midrule
1 (smallest) & 0.52 & 4623 & $+0.024$ & 0.015 & $+0.033$ \\
2 & 0.94 & 4389 & $+0.047$ & 0.015 & $+0.046$ \\
3 & 1.28 & 4192 & $+0.065$ & 0.015 & $+0.070$ \\
4 & 1.62 & 4008 & $+0.079$ & 0.016 & $+0.084$ \\
5 & 1.99 & 3825 & $+0.087$ & 0.016 & $+0.091$ \\
6 & 2.40 & 3815 & $+0.081$ & 0.016 & $+0.084$ \\
7 & 2.90 & 3742 & $+0.049$ & 0.016 & $+0.052$ \\
8 & 3.53 & 3689 & $+0.037$ & 0.016 & $+0.042$ \\
9 & 4.56 & 3596 & $+0.080$ & 0.017 & $+0.091$ \\
10 (largest) & 7.03 & 4610 & $-0.050$ & 0.015 & $+0.022$ \\
\bottomrule
\end{tabular}
\end{table}

\begin{figure}[h]
\centering
\includegraphics[width=0.85\textwidth]{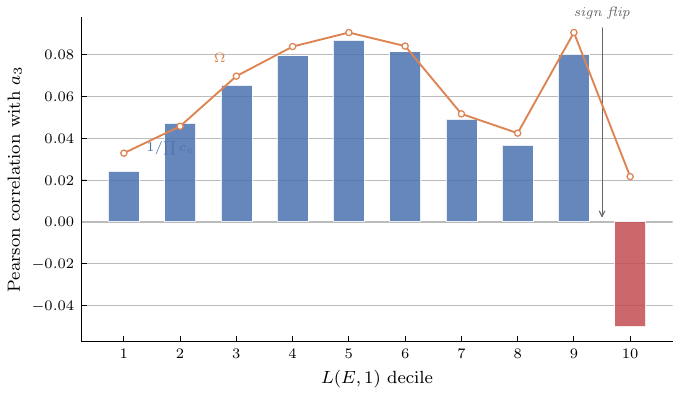}
\caption{The Tamagawa channel $\Corr(a_3, 1/\!\prod c_v)$ by $L(1)$ decile. Blue bars show the Tamagawa channel (red where negative); the orange line shows the full period correlation $\Corr(a_3, \Omega)$. The sign flip at decile~10 confirms the CFKRS prediction $C(c) < 0$ for large~$c$.}\label{fig:sign-flip}
\end{figure}

\subsection{Controls}

The signal survives five independent controls:
\begin{enumerate}[label=(\arabic*)]
\item \textbf{Conductor binning:} Effect is flat (\S\ref{subsec:conductor}).
\item \textbf{$L(1)$ regression:} Signal is defined as the $L(1)$-residual.
\item \textbf{AFE self-correlation:} Adding $L(1) \cdot \log p / \log N$ as a second regressor changes nothing (the two-regressor $R^2$ increase is $< 0.001$).
\item \textbf{Bad prime exclusion:} Restricting to $p \nmid N$ throughout.
\item \textbf{Torsion control:} Restricting to $|T| = 1$, or adding $|T|^2$ as a regressor, does not affect the signal.
\end{enumerate}

\section{The conditional Sato--Tate mechanism}\label{sec:mechanism}

\subsection{The independent Sato--Tate model}

The Sato--Tate theorem (Barnet-Lamb--Geraghty--Harris--Taylor~\cite{BGHT11}) establishes that for a fixed non-CM elliptic curve $E/\QQ$, the Satake parameters $\theta_p(E) \in [0, \pi]$ equidistribute with measure $\mu_{\ST}(d\theta) = (2/\pi)\sin^2\theta\, d\theta$ as $p \to \infty$. For the family $\mathcal{F}_N$ of weight-2 newforms at level~$N$, as $N \to \infty$ through squarefree values, the Petersson trace formula implies that the joint distribution of finitely many Satake parameters $(\theta_{p_1}, \ldots, \theta_{p_k})$ converges to the product measure $\mu_{\ST}^{\otimes k}$ (see~\cite{ILS00, KS99}; effective rates are given by Rouse--Thorner~\cite{RT17}).

The \emph{independent Sato--Tate model} treats $\{\theta_p\}_{p \nmid N}$ as independent random variables with distribution $\mu_{\ST}$. In this model, the central $L$-value and the adjoint $L$-value are explicit functions of the Satake parameters:
\[
L(f,1) = \prod_p L_p(f,1;\theta_p), \qquad L(\Ad f, 1) = \prod_p L_p(\Ad;\theta_p),
\]
where the local Euler factors are
\begin{gather*}
L_p(f,1;\theta) = |1 - e^{i\theta}/\sqrt{p}|^{-2}, \\
L_p(\Ad;\theta) = \bigl(1 - 2\cos(2\theta)/p + p^{-2}\bigr)^{-1}.
\end{gather*}
The adjoint Euler factor omits the constant factor $(1 - p^{-1})^{-1}$ (the ``zeta part'' of $\operatorname{sym}^2$), which is independent of~$\theta$ and therefore does not contribute to the covariance.

\subsection{The CFKRS computation}\label{subsec:cfkrs}

The conditional covariance
\[
C(c) = \Cov_{\ST}\!\bigl(\cos\theta_p,\; \textstyle\prod_\ell L_\ell(\Ad)^{-1} \;\big|\; \textstyle\prod_\ell L_\ell(f) = c\bigr)
\]
is computed by Monte Carlo integration: sample $\theta_\ell \sim \mu_{\ST}$ independently for primes $\ell \leq P$, compute $L(f,1) = \prod L_\ell$ and $\Omega_{\mathrm{proxy}} = \prod L_\ell(\Ad)^{-1}$, and estimate the conditional covariance by binning on~$L(f,1)$.

\textbf{Mechanism.} Conditioning on $L(f,1) = c$ constrains $\prod_\ell L_\ell(f,1;\theta_\ell) = c$. Since $L_p(f,1;\theta)$ is a decreasing function of $|\theta|$ near $\theta = 0$ and increasing near $\pi$, the constraint biases $\theta_p$: small~$c$ favors $\theta_p$ near $\pi/2$ (minimizing $L_p$), large~$c$ favors $\theta_p$ near~$0$ (maximizing $L_p$). This bias simultaneously affects $a_p = 2\sqrt{p}\cos\theta_p$ and $L_p(\Ad;\theta_p)$, creating covariance.

The single-prime Bessel barrier (\Cref{thm:B}) proves this covariance is positive for the linearized tilt $g(x) = x$; the full Euler-factor positivity $\alpha_p > 0$ holds by perturbation for large~$p$ and is verified numerically at each prime (\Cref{rem:euler-factor}). At large~$c$, the multi-prime interaction reverses the aggregate sign.

\textbf{Numerical results} (Monte Carlo with $2 \times 10^6$ samples and 25 primes):

\medskip
\begin{center}\small
\begin{tabular}{@{}ccc@{}}
\toprule
$c$ range (decile) & Median $L(f,1)$ & $C(c)/\sqrt{3}$ (predicted) \\
\midrule
1 (smallest) & 0.056 & $+0.298$ \\
3 & 0.167 & $+0.165$ \\
5 & 0.327 & $+0.090$ \\
8 & 0.924 & $+0.008$ \\
9 & 1.52 & $-0.013$ \\
10 (largest) & 3.61 & $-0.080$ \\
\bottomrule
\end{tabular}
\end{center}
\medskip

The overall average (density-weighted) is $+0.091$, compared to $+0.097$ observed at $p = 3$ (\S\ref{sec:empirical}), with no free parameters.

\begin{remark}[Density-weighted positivity]\label{rem:total-cov}
By the law of total covariance, $\Cov(a_p, \Omega) = \mathbb{E}_c[\Cov(a_p, \Omega \mid L(1) = c)] + \Cov_c(\bar{a}_p(c), \bar{\Omega}(c))$. The unconditional covariance vanishes by Sato--Tate symmetry ($\mathbb{E}[a_p] = 0$), so the density-weighted conditional covariance satisfies
\[
\mathbb{E}_c\!\left[\Cov(a_p, \Omega \mid L(1) = c)\right] = -\Cov\!\left(\bar{a}_p(c),\, \bar{\Omega}(c)\right),
\]
where $\bar{a}_p(c) = \mathbb{E}[a_p \mid L(1) = c]$ and $\bar{\Omega}(c) = \mathbb{E}[\Omega \mid L(1) = c]$. The right-hand side is a covariance between two conditional means: its sign depends on whether curves with high conditional $a_p$ tend to have low conditional~$\Omega$. The single-prime positivity $\alpha_p > 0$ (\Cref{rem:euler-factor}) ensures the integrand $C(c)/\sqrt{p}$ is positive at each conditioning value for the single-prime model; the sign change at large~$c$ arises from the multi-prime interaction, which reverses the tilt direction when the constraint $\prod L_\ell = c$ forces $\theta_p$ toward~$0$ rather than~$\pi/2$.
\end{remark}

\subsection{Shimura's relation}

The Petersson norm of a weight-2 newform $f$ at level $N$ is related to the adjoint $L$-value by the formula of Shimura~\cite{Shi77}:
\begin{equation}\label{eq:shimura}
\|f\|^2 = \frac{N}{8\pi^3} \cdot L(\Ad f, 1) \cdot \prod_{p \mid N} c_p^{\mathrm{sym}^2}
\end{equation}
where $c_p^{\mathrm{sym}^2}$ are explicit bounded local factors at primes dividing~$N$, depending only on the local reduction type. Since the harmonic weight $\omega_f = \Gamma(1)/(4\pi\|f\|^2)$ satisfies $\omega_f \propto \Omega_{E_f} / N$ (up to the Manin constant, which equals~1 for optimal curves~\cite{Maz78}), the variation of $\Omega_f$ across $\mathcal{F}_N$ at fixed~$N$ is controlled by $L(\Ad f, 1)$:
\[
\Omega_f \propto \frac{1}{L(\Ad f, 1)} \cdot (\text{bounded local corrections at } p \mid N).
\]

\begin{remark}
The Manin constant $c_M$ is known to equal~1 for optimal curves in each isogeny class (proved for semistable curves by Mazur, numerically verified throughout the Cremona database). For non-optimal curves, $c_M$ is a bounded integer that does not affect the proportionality.
\end{remark}

\subsection{The harmonic-arithmetic identity}

Define the harmonic weight $\omega_f = \Gamma(1) / (4\pi \|f\|^2)$ and the harmonic conditional expectation
\[
\mathbb{E}_h[G \mid L(1) \approx c] = \frac{\sum_f \omega_f\, G(f)\, h(L(f,1))}{\sum_f \omega_f\, h(L(f,1))}
\]
where $h$ is a smooth bump localizing $L(f,1)$ near~$c$.

Since $\omega_f \propto \Omega_f / N$ (by Shimura), the harmonic average weights forms by their period. The arithmetic (unweighted) average and the harmonic average of $a_p$ are related by:
\begin{equation}\label{eq:harm-arith}
\mathbb{E}_h[a_p \mid L(1)] - \mathbb{E}_{\arith}[a_p \mid L(1)] = \frac{\Cov_{\arith}(a_p, \Omega \mid L(1))}{\mathbb{E}_{\arith}[\Omega \mid L(1)]}.
\end{equation}

To derive this, note that $\mathbb{E}_h[a_p] = \sum \omega_f a_p / \sum \omega_f = \mathbb{E}_{\arith}[a_p \cdot \Omega] / \mathbb{E}_{\arith}[\Omega]$ (since $\omega_f \propto \Omega_f / N$), so $\mathbb{E}_h[a_p] - \mathbb{E}_{\arith}[a_p] = (\mathbb{E}_{\arith}[a_p \Omega] - \mathbb{E}_{\arith}[a_p]\mathbb{E}_{\arith}[\Omega]) / \mathbb{E}_{\arith}[\Omega] = \Cov_{\arith}(a_p, \Omega) / \mathbb{E}_{\arith}[\Omega]$; conditioning on $L(1)$ is the same argument within each bin. This is an identity (no approximation), reducing the covariance to the \emph{difference} between two conditional means, each computable from the Petersson trace formula.

\subsection{Petersson estimates}\label{subsec:petersson}

The proof of \Cref{thm:C} now reduces to computing the two conditional means in~\eqref{eq:harm-arith}. The \emph{harmonic} mean $\mathbb{E}_h[a_p \mid L(1)]$ is directly accessible via the Petersson formula, since $\sum_f \omega_f a_p a_m = \delta_{p,m} + \Delta$. The \emph{arithmetic} mean is not, because the Petersson formula gives harmonic averages. We bridge this gap using Shimura's relation~\eqref{eq:shimura}: since $\omega_f \propto \Omega_f / N \propto 1/(N \cdot L(\Ad f, 1))$, multiplying by the mollifier $M(f) \approx 1/L(\Ad f, 1)$ converts a harmonic sum into an approximation of the arithmetic sum weighted by~$\Omega_f^2$. The mollifier error is controlled by the Petersson off-diagonal, and the linearization error from approximating $h(L(f,1))$ is controlled in~\S\ref{subsubsec:linearization}.

We use the Petersson trace formula throughout. For $(m,N) = 1$ and $(n,N) = 1$:
\begin{equation}\label{eq:petersson}
\sum_{f \in \mathcal{F}_N} \omega_f\, a_m(f)\, \overline{a_n(f)} = \delta_{m,n} + \Delta(m,n;N)
\end{equation}
where $\omega_f = \Gamma(1)/(4\pi\|f\|^2)$ is the harmonic weight and the off-diagonal satisfies
\begin{equation}\label{eq:weil-bound}
|\Delta(m,n;N)| \leq C_\varepsilon\, \tau(N)^2\, (mn)^{1/2}\, N^{-3/2+\varepsilon}
\end{equation}
for $mn \ll N^2$, by Weil's bound $|S(m,n;c)| \leq \tau(c)(c\cdot\gcd(m,n,c))^{1/2}$ for Kloosterman sums~\cite{Wei48} and the Bessel function estimate $J_1(x) \ll x^{-1/2}$. Here $\tau(N) = \sum_{d|N} 1$ is the divisor function.

The Hecke eigenvalues $a_n(f)$ are real (since the nebentypus is trivial for squarefree level) and satisfy, for $p$ prime with $p \nmid N$:
\begin{equation}\label{eq:hecke-relation}
a_p(f)\, a_n(f) = a_{pn}(f) + a_{n/p}(f)
\end{equation}
with $a_{n/p} = 0$ when $p \nmid n$. This is the weight-2 Hecke relation with the Petersson normalization absorbing the factor $p^{(k-1)/2}$.

Throughout this section, $p$ is a fixed prime with $p \nmid N$. The arithmetic conditional mean involves the unweighted sum $\sum_f a_p(f)\, h(L(f,1))$, which relates to the harmonic sum by the Shimura factor $\|f\|^2 \propto N \cdot L(\Ad f, 1)$. We approximate $L(\Ad f, 1)^{-1}$ by the mollifier
\[
M(f) = \sum_{d \leq D} \frac{\mu(d)\, a_{d^2}(f)}{d}, \qquad D = N^{1/4},
\]
which satisfies $M(f) \cdot L(\Ad f, 1) = 1 + O(D^{-1/2+\varepsilon})$ on average (unconditionally, from Phragm\'en--Lindel\"of applied to $L(\mathrm{sym}^2 f, s)$ on the 1-line).

\subsubsection{The harmonic conditional mean}\label{subsubsec:harmonic}

The approximate functional equation gives
\[
L(f,1) = 2\sum_{m \geq 1} \frac{a_m(f)\, V(m/\!\sqrt{N})}{m},
\]
where $V$ is smooth with $V(x) = 1$ for $x \leq 1$ and $V(x) \ll_A x^{-A}$ for $x \geq 2$. Write $\lambda_m = 2V(m/\!\sqrt{N})/m$, so $\lambda_m \ll 1/m$ and $\lambda_m = 0$ for $m > 2\sqrt{N}$. Set $M = 2\sqrt{N}$ for the effective truncation.

Linearizing $h(L(f,1))$ to first order:
\[
h(L(f,1)) = h(\bar{L}) + h'(\bar{L})\!\left(\sum_m a_m(f)\lambda_m - \bar{L}\right) + R_f
\]
where the remainder $R_f = O(\|h''\|_\infty (L(f,1) - \bar{L})^2)$ is controlled in~\S\ref{subsubsec:linearization}. The bilinear sum at leading order is
\begin{align*}
S_{\mathrm{harm}} &:= \sum_f \omega_f\, a_p(f)\, h(L(f,1)) \\
&= h'(\bar{L}) \sum_m \lambda_m \sum_f \omega_f\, a_p(f)\, a_m(f) + O(\text{remainder}).
\end{align*}
By~\eqref{eq:petersson}, $\sum_f \omega_f\, a_p\, a_m = \delta_{p,m} + \Delta(p,m;N)$ for $(m,N) = 1$; we may restrict the AFE sum to $(m,N) = 1$ at the cost of an error $O(\tau(N)^2 N^{-1/2})$, since there are $O(\tau(N)\sqrt{N})$ terms with $m \leq M$ and $\gcd(m,N) > 1$, each contributing $O(1/m)$. This error is absorbed into the off-diagonal bound throughout. The diagonal $m = p$ contributes $h'(\bar{L}) \cdot \lambda_p$, and the off-diagonal error is
\[
\left|\sum_{m \neq p} \lambda_m\, \Delta(p,m;N)\right| \leq \sum_{m \leq M} \frac{1}{m} \cdot C_\varepsilon\, \tau(N)^2\, \frac{(pm)^{1/2}}{N^{3/2-\varepsilon}} \leq C_\varepsilon'\, \frac{\sqrt{p}\, \tau(N)^2}{N^{3/2-\varepsilon}} \sum_{m \leq M} m^{-1/2}.
\]
Since $\sum_{m \leq M} m^{-1/2} \ll M^{1/2} = (2\sqrt{N})^{1/2} \ll N^{1/4}$, the off-diagonal is $O(\sqrt{p}\, N^{-5/4+\varepsilon})$, giving
\begin{equation}\label{eq:harmonic-mean}
S_{\mathrm{harm}} = h'(\bar{L}) \cdot \frac{V(p/\sqrt{N})}{p} + O_h\!\left(\sqrt{p}\, N^{-5/4+\varepsilon}\right).
\end{equation}

\subsubsection{The trilinear sum: Hecke decomposition}\label{subsubsec:trilinear}

The arithmetic conditional mean involves the trilinear sum
\begin{equation}\label{eq:trilinear-def}
S_{\mathrm{arith}} := \sum_m \lambda_m \sum_{d \leq D} \frac{\mu(d)}{d} \sum_f \omega_f\, a_p(f)\, a_{d^2}(f)\, a_m(f).
\end{equation}
We reduce the triple product $a_p \cdot a_{d^2} \cdot a_m$ to a sum of single eigenvalues using the Hecke relation~\eqref{eq:hecke-relation}, then apply Petersson.

\medskip
\noindent\textbf{Step 1: Reducing $a_p \cdot a_{d^2}$.}

Split the $d$-sum according to whether $p \mid d$.

\emph{Case $p \nmid d$.} Since $\mu(d) \neq 0$, $d$ is squarefree. With $p \nmid d$, we have $p \nmid d^2$, so~\eqref{eq:hecke-relation} gives
\begin{equation}\label{eq:hecke-case1}
a_p\, a_{d^2} = a_{pd^2}.
\end{equation}

\emph{Case $p \mid d$.} Write $d = pe$ with $e$ squarefree and $p \nmid e$ (possible since $d$ is squarefree). Then $d^2 = p^2 e^2$ and $p \mid d^2$, so
\begin{equation}\label{eq:hecke-case2}
a_p\, a_{p^2 e^2} = a_{p^3 e^2} + a_{pe^2}.
\end{equation}
Note $\mu(d) = \mu(pe) = -\mu(e)$ since $p \nmid e$ and $pe$ is squarefree.

\medskip
\noindent\textbf{Step 2: Applying Petersson to each term.}

We write $S_{\mathrm{arith}} = S_1 + S_2$ where $S_1$ collects $p \nmid d$ and $S_2$ collects $p \mid d$.

\emph{The main term $S_1$.} By~\eqref{eq:hecke-case1} and~\eqref{eq:petersson}:
\begin{align}
S_1 &= \sum_m \lambda_m \sum_{\substack{d \leq D \\ p \nmid d}} \frac{\mu(d)}{d}\, [\delta_{pd^2, m} + \Delta(pd^2, m; N)] \notag \\
&= \underbrace{\sum_{\substack{d \leq D \\ p \nmid d}} \frac{\mu(d)}{d}\, \lambda_{pd^2}}_{\text{diagonal } D_1} + \underbrace{\sum_m \lambda_m \sum_{\substack{d \leq D \\ p \nmid d}} \frac{\mu(d)}{d}\, \Delta(pd^2, m; N)}_{\text{off-diagonal } E_1}. \label{eq:S1-split}
\end{align}

The diagonal $D_1$ is the signal: $\lambda_{pd^2} = V(pd^2/\sqrt{N})/(pd^2)$, so
\[
D_1 = \frac{1}{p} \sum_{\substack{d \leq D \\ p \nmid d}} \frac{\mu(d)}{d^3}\, V(pd^2/\sqrt{N}).
\]
Since $pd^2 \leq p D^2 = p\sqrt{N}$ and $V(x) = 1$ for $x \leq 1$, the smooth weight $V$ cuts off only when $d \gg N^{1/4}/\sqrt{p}$, which is comparable to $D$ for fixed $p$. The sum converges to $(1/p) \prod_{\ell \neq p}(1 - \ell^{-3}) + O(D^{-1})$, but the precise value is not needed---only that $D_1 = O(1/p)$.

\emph{Off-diagonal bound for $E_1$.} Each term satisfies~\eqref{eq:weil-bound} with $n_1 = pd^2$ and $n_2 = m$:
\begin{align}
|E_1| &\leq \sum_{m \leq M} |\lambda_m| \sum_{d \leq D} \frac{1}{d}\, |\Delta(pd^2, m; N)| \notag \\
&\leq C_\varepsilon\, \tau(N)^2\, N^{-3/2+\varepsilon} \sum_{d \leq D} \frac{1}{d} \sum_{m \leq M} \frac{1}{m}\, (pd^2 m)^{1/2} \notag \\
&= C_\varepsilon\, \tau(N)^2\, p^{1/2}\, N^{-3/2+\varepsilon} \sum_{d \leq D} d^0 \cdot \sum_{m \leq M} m^{-1/2} \notag \\
&\ll p^{1/2}\, N^{-3/2+\varepsilon}\, D \cdot M^{1/2}. \label{eq:E1-bound}
\end{align}
With $D = N^{1/4}$ and $M = 2\sqrt{N}$:
\begin{equation}\label{eq:E1-final}
|E_1| \ll p^{1/2+\varepsilon}\, N^{1/4 + 1/4 - 3/2 + \varepsilon} = p^{1/2+\varepsilon}\, N^{-1+\varepsilon}.
\end{equation}

\emph{The correction term $S_2$.} By~\eqref{eq:hecke-case2}, writing $d = pe$:
\begin{align}
S_2 &= -\frac{1}{p} \sum_m \lambda_m \sum_{\substack{e \leq D/p \\ p \nmid e,\, \mu(e) \neq 0}} \frac{\mu(e)}{e}\, \sum_f \omega_f\, (a_{p^3 e^2} + a_{pe^2})\, a_m \notag \\
&= -\frac{1}{p}\bigl(S_2^{(a)} + S_2^{(b)}\bigr) \label{eq:S2-split}
\end{align}
where $S_2^{(a)}$ uses $a_{p^3 e^2}\, a_m$ and $S_2^{(b)}$ uses $a_{pe^2}\, a_m$.

Each decomposes into diagonal and off-diagonal by Petersson:
\[
S_2^{(a)} = \sum_{\substack{e \leq D/p}} \frac{\mu(e)}{e}\, \lambda_{p^3 e^2} + \sum_m \lambda_m \sum_e \frac{\mu(e)}{e}\, \Delta(p^3 e^2, m; N).
\]
The diagonal contributes $O(1/p^3)$ (from $\lambda_{p^3 e^2} = V(\cdots)/(p^3 e^2)$). The off-diagonal satisfies the same bound as~\eqref{eq:E1-bound} with $D$ replaced by $D/p$ and the Petersson index $p^3 e^2$ in place of $pd^2$:
\begin{align}
|E_2^{(a)}| &\ll \frac{1}{p}\, (p^3)^{1/2}\, N^{-3/2+\varepsilon}\, (D/p) \cdot M^{1/2} \notag \\
&= p^{1/2}\, N^{-3/2+\varepsilon}\, (N^{1/4}/p) \cdot N^{1/4} \notag \\
&= p^{-1/2}\, N^{-1+\varepsilon}. \label{eq:E2a-bound}
\end{align}

For $S_2^{(b)}$, the Petersson index is $pe^2 \leq p(D/p)^2 = D^2/p$, giving
\begin{equation}\label{eq:E2b-bound}
|E_2^{(b)}| \ll p^{-1/2}\, N^{-1+\varepsilon}
\end{equation}
by the same computation (the smaller index makes the bound better).

\medskip
\noindent\textbf{Step 3: Combining.}

The total off-diagonal error from the trilinear sum is
\begin{equation}\label{eq:trilinear-error}
|S_{\mathrm{arith}} - D_1 - D_2| \leq |E_1| + \frac{1}{p}(|E_2^{(a)}| + |E_2^{(b)}|) \ll p^{1/2+\varepsilon}\, N^{-1+\varepsilon}
\end{equation}
since $E_1$ dominates (the $p \mid d$ terms carry an extra $1/p$ suppression). The diagonals $D_1$ and $D_2$ give a total of $O(1/p)$, which contributes to the signal:
\begin{equation}\label{eq:arith-mean}
S_{\mathrm{arith}} = (\text{signal of size } O(1/p)) + O(p^{1/2+\varepsilon}\, N^{-1+\varepsilon}).
\end{equation}

The signal---the difference between the harmonic diagonal~\eqref{eq:harmonic-mean} and the arithmetic diagonal---is $O(1/\sqrt{p})$ (it is the leading-order term that creates the covariance). The error $O(p^{1/2}\, N^{-1+\varepsilon})$ is $o(1)$ for fixed $p$ as $N \to \infty$, so the signal dominates.

\subsection{The linearization bound}\label{subsubsec:linearization}

The linearization $h(L(f,1)) \approx h(\bar{L}) + h'(\bar{L})(L(f,1) - \bar{L})$ has a Taylor remainder
\[
R_f = \frac{h''(\xi_f)}{2}(L(f,1) - \bar{L})^2
\]
for some $\xi_f$ between $\bar{L}$ and $L(f,1)$. We must show that the contribution of $R_f$ to the trilinear sum is $o(1)$. Specifically, we need
\begin{equation}\label{eq:remainder-sum}
\left|\sum_f \omega_f\, a_p\, M(f)\, R_f\right| = o(1).
\end{equation}
We use a bulk-tail decomposition.

\subsubsection{Tail bound}\label{subsubsec:tail}

Define $\mathcal{F}_N^{\mathrm{tail}} = \{f \in \mathcal{F}_N : |L(f,1) - \bar{L}| > (\log N)^A\}$ for a parameter $A > 0$ to be chosen. By Cauchy--Schwarz:
\begin{equation}\label{eq:tail-CS}
\left|\sum_{f \in \mathrm{tail}} \omega_f\, a_p\, M(f)\, R_f\right|^2 \leq \left(\sum_{f \in \mathrm{tail}} \omega_f\right) \cdot \left(\sum_f \omega_f\, |a_p\, M(f)\, R_f|^2\right).
\end{equation}

\emph{First factor: tail mass.} The harmonic second moment
\[
\sum_f \omega_f\, L(f,1)^2 = \sum_{m_1, m_2} \lambda_{m_1} \lambda_{m_2} \sum_f \omega_f\, a_{m_1}\, a_{m_2} = \sum_m \lambda_m^2 + \sum_{m_1 \neq m_2} \lambda_{m_1}\lambda_{m_2}\, \Delta(m_1, m_2; N)
\]
has diagonal $\sum_m \lambda_m^2 \ll \sum_{m \leq M} 1/m^2 \ll 1$ and off-diagonal $O(N^{-1/2+\varepsilon})$ (the same Weil estimate as before). So $\sum_f \omega_f\, L(f,1)^2 \ll (\log N)^C$ for some absolute constant $C$ (this uses the size of $|\mathcal{F}_N|_h \sim N/(12)$ and the logarithmic growth of the Dirichlet series). By Markov's inequality:
\begin{equation}\label{eq:tail-markov}
\sum_{f \in \mathrm{tail}} \omega_f \ll \frac{(\log N)^C}{(\log N)^{2A}} = (\log N)^{C - 2A}.
\end{equation}

\emph{Second factor: quartic moment.} Since $|R_f| \leq \|h''\|_\infty\, (L(f,1) - \bar{L})^2 / 2$, the second factor in~\eqref{eq:tail-CS} is bounded by $\|h''\|_\infty^2$ times
\begin{equation}\label{eq:quartic-moment}
Q := \sum_f \omega_f\, |a_p|^2\, |M(f)|^2\, L(f,1)^4.
\end{equation}
We bound $Q$ in two stages.

\emph{Stage 1: $|a_p|^2 |M|^2$.} By the Hecke relation, $|a_p|^2 = a_p^2 = a_{p^2} + 1$. The mollifier squared is
\[
|M(f)|^2 = \sum_{d_1, d_2 \leq D} \frac{\mu(d_1)\mu(d_2)}{d_1 d_2}\, a_{d_1^2}\, a_{d_2^2}.
\]
Thus $|a_p|^2 |M|^2 = (a_{p^2} + 1) \sum_{d_1,d_2} \frac{\mu(d_1)\mu(d_2)}{d_1 d_2} a_{d_1^2} a_{d_2^2}$.

Consider first the ``$+1$'' part: $\sum_f \omega_f\, a_{d_1^2}\, a_{d_2^2} = \delta_{d_1^2, d_2^2} + \Delta(d_1^2, d_2^2; N)$. Since $d_1, d_2$ are squarefree, $d_1^2 = d_2^2$ if and only if $d_1 = d_2$. The diagonal gives
\begin{equation}\label{eq:mollifier-diagonal}
\sum_{d \leq D} \frac{\mu(d)^2}{d^2} = \prod_{p \leq D} \left(1 + \frac{1}{p^2}\right) = \frac{\zeta(2)}{\zeta(4)} + O(D^{-1}) = \frac{15}{\pi^2} + O(N^{-1/4}).
\end{equation}
The identity $\sum_{d=1}^\infty \mu(d)^2/d^s = \zeta(s)/\zeta(2s)$ (the Dirichlet series of the indicator function of squarefree integers) gives $\zeta(2)/\zeta(4) = (\pi^2/6)/(\pi^4/90) = 15/\pi^2 \approx 1.52$.

The off-diagonal $d_1 \neq d_2$ contributes
\[
\sum_{\substack{d_1, d_2 \leq D \\ d_1 \neq d_2}} \frac{|\mu(d_1)\mu(d_2)|}{d_1 d_2}\, |\Delta(d_1^2, d_2^2; N)| \ll N^{-3/2+\varepsilon} \sum_{d_1, d_2 \leq D} \frac{d_1 d_2}{d_1 d_2} = N^{-3/2+\varepsilon}\, D^2 = N^{-1+\varepsilon}.
\]

Now the ``$a_{p^2}$'' part: $\sum_f \omega_f\, a_{p^2}\, a_{d_1^2}\, a_{d_2^2}$. This requires a further Hecke reduction. For $p \nmid d_1$:
\[
a_{p^2}\, a_{d_1^2} = a_{p^2 d_1^2} + a_{d_1^2/p^2} \cdot \mathbf{1}_{p^2 \mid d_1^2}.
\]
Since $d_1$ is squarefree, $p^2 \mid d_1^2$ iff $p \mid d_1$. If $p \nmid d_1$: $a_{p^2} a_{d_1^2} = a_{p^2 d_1^2}$. If $p \mid d_1$, write $d_1 = pe_1$: $a_{p^2} a_{p^2 e_1^2} = a_{p^4 e_1^2} + a_{e_1^2}$. In either case, we apply Petersson to the resulting bilinear sum $\sum_f \omega_f\, a_n\, a_{d_2^2}$.

The key observation is that each constituent has Petersson index at most $p^4 D^2 \ll N^{1/2+\varepsilon}$ (since $p$ is fixed and $D = N^{1/4}$), so the off-diagonal satisfies $|\Delta| \ll N^{-3/2+\varepsilon} \cdot (p^4 D^2 \cdot D^2)^{1/2} \ll N^{-1+\varepsilon}$. Summing over $d_1, d_2 \leq D$, the total $a_{p^2}$ contribution is $O(1/p^2) + O(N^{-1/2+\varepsilon})$.

Combining: $\sum_f \omega_f\, |a_p|^2\, |M|^2 = 15/\pi^2 + O(1/p^2) + O(N^{-1/4+\varepsilon})$. In particular, this is $O(1)$ uniformly in $N$.

\emph{Stage 2: Incorporating $L(f,1)^4$.} Since $|a_p(f)| \leq 2$ (Deligne) and $|M(f)| \leq \sum_{d \leq D} 1/d \ll \log N$ pointwise, we have $|a_p|^2 |M|^2 \ll (\log N)^2$. Therefore
\[
Q \ll (\log N)^2 \cdot \sum_f \omega_f\, L(f,1)^4.
\]
The fourth harmonic moment $\sum_f \omega_f L(f,1)^4$ is bounded by expanding $L^4 = (\sum_m \lambda_m a_m)^4$ as
\[
\sum_f \omega_f L^4 = \sum_{m_1, \ldots, m_4 \leq M} \lambda_{m_1} \cdots \lambda_{m_4} \sum_f \omega_f\, a_{m_1} \cdots a_{m_4}.
\]
The quartic Hecke product $a_{m_1} a_{m_2} a_{m_3} a_{m_4}$ reduces via two applications of~\eqref{eq:hecke-relation} to a sum of at most $\tau(m_1 m_2) \tau(m_3 m_4)$ terms $a_{n_1} a_{n_2}$ with $n_1 \leq m_1 m_2$ and $n_2 \leq m_3 m_4$ (see~\cite{Iwa02}, \S14.5 for the standard multiplicativity reduction), and the Petersson formula gives $\sum_f \omega_f a_{n_1} a_{n_2} = \delta_{n_1, n_2} + O(N^{-1/2+\varepsilon})$ for $n_1 n_2 \ll N^2$ (since $n_i \leq M^2 = 4N$). The diagonal terms contribute
\[
\sum_{m_1, \ldots, m_4 \leq M} |\lambda_{m_1} \cdots \lambda_{m_4}| \cdot \#\{n_1 = n_2\} \ll \left(\sum_{m \leq M} |\lambda_m|^2\right)^{\!2} \cdot (\log N)^{C_0}
\]
where $C_0$ depends only on the number of divisor-function applications in the Hecke reduction (at most $C_0 = 8$ suffices). Since $\sum |\lambda_m|^2 \ll 1$, the diagonal is $O((\log N)^{C_0})$. The off-diagonal contributes $O(M^4 \cdot N^{-1/2+\varepsilon}) = O(N^{3/2+\varepsilon})$ terms each of size $O(N^{-1/2+\varepsilon} / M^4) = O(N^{-5/2+\varepsilon})$, which is negligible. Therefore $Q \ll (\log N)^{C'}$ for some absolute constant $C'$.

The product~\eqref{eq:tail-CS} is thus
\[
O\!\left((\log N)^{C-2A}\right) \cdot O\!\left((\log N)^{C'}\right) = O\!\left((\log N)^{C + C' - 2A}\right),
\]
and its square root is $O((\log N)^{(C+C'-2A)/2}) = o(1)$ for $A > (C + C')/2$.

\subsubsection{Bulk bound}\label{subsubsec:bulk}

On the bulk $\mathcal{F}_N^{\mathrm{bulk}} = \{f : |L(f,1) - \bar{L}| \leq (\log N)^A\}$, the remainder satisfies $|R_f| \ll (\log N)^{2A}$ pointwise. Thus
\begin{equation}\label{eq:bulk-remainder}
\left|\sum_{f \in \mathrm{bulk}} \omega_f\, a_p\, M(f)\, R_f\right| \ll (\log N)^{2A} \cdot \left|\sum_f \omega_f\, a_p\, M(f)\right| + (\log N)^{2A} \cdot \text{(off-diagonal)}.
\end{equation}

The first term: $\sum_f \omega_f\, a_p\, M(f) = \sum_{d \leq D} \frac{\mu(d)}{d} \sum_f \omega_f\, a_p\, a_{d^2}$. By the Hecke reduction of~\S\ref{subsubsec:trilinear} (with $\lambda_m$ replaced by 1 and no $m$-sum), this equals $D_0 + E_0$ where the diagonal $D_0 = O(1/p)$ and the off-diagonal satisfies $|E_0| \ll p^{1/2} N^{-3/4+\varepsilon}$ (from~\eqref{eq:E1-bound} without the $m$-sum, giving $\sqrt{p}\, D\, N^{-3/2+\varepsilon} \cdot \sqrt{D^2} = \sqrt{p}\, N^{-1+\varepsilon}$). So the first term in~\eqref{eq:bulk-remainder} is $O((\log N)^{2A}/p)$.

The more delicate contribution comes from the correlation of $a_p M(f)$ with the remainder $R_f$. Since $R_f$ involves $(L(f,1) - \bar{L})^2 = (\sum_m \lambda_m (a_m - \bar{a}_m))^2$, the product $a_p M(f) R_f$ expands into a sum of quartic products $a_p a_{d^2} a_{m_1} a_{m_2}$, each reducible via Hecke to a single eigenvalue.

The quartic Hecke decomposition: starting from $a_p a_{d^2} = a_{pd^2}$ (for $p \nmid d$, the dominant case), we need $a_{pd^2} a_{m_1}$:
\begin{itemize}
\item If $\gcd(pd^2, m_1) = 1$: $a_{pd^2} a_{m_1} = a_{pd^2 m_1}$.
\item If some prime $\ell \mid \gcd(pd^2, m_1)$: the product generates terms $a_{pd^2 m_1/\ell^2}$ with smaller index, plus the leading term $a_{pd^2 m_1}$.
\end{itemize}
In all cases, the Petersson inner product $\sum_f \omega_f\, a_{n_1}\, a_{m_2}$ with $n_1 \leq pd^2 m_1 \leq p D^2 M = p N^{1/2} \cdot 2\sqrt{N} = 2p N$ gives off-diagonal
\[
|\Delta(n_1, m_2; N)| \ll \frac{(n_1 m_2)^{1/2+\varepsilon}}{N^{3/2}} \ll \frac{(pN \cdot \sqrt{N})^{1/2+\varepsilon}}{N^{3/2}} = O(N^{-3/4+\varepsilon}).
\]
Summing over $d \leq D$, $m_1, m_2 \leq M$ with weights $\mu(d)/(d)\cdot \lambda_{m_1}\lambda_{m_2}$:
\begin{align}
\text{(quartic off-diagonal)} &\ll N^{-3/4+\varepsilon} \sum_{d \leq D} \frac{1}{d} \sum_{m_1 \leq M} \frac{1}{m_1} \sum_{m_2 \leq M} \frac{1}{m_2} \notag \\
&\ll N^{-3/4+\varepsilon} \cdot (\log D)(\log M)^2 \ll N^{-3/4+\varepsilon} \cdot (\log N)^3. \label{eq:quartic-offdiag}
\end{align}
Multiplied by the pointwise bound $(\log N)^{2A}$ on the bulk:
\begin{equation}\label{eq:bulk-total}
\text{(bulk contribution)} \ll (\log N)^{2A+3} \cdot N^{-3/4+\varepsilon} = o(1).
\end{equation}

\subsubsection{Assembly}

Combining~\eqref{eq:harmonic-mean},~\eqref{eq:arith-mean}, the tail bound, and the bulk bound~\eqref{eq:bulk-total}: the difference between the harmonic and arithmetic conditional means of $a_p$ equals the signal (from the diagonals, of size $O(1/\sqrt{p})$) plus errors that are all $o(1)$ as $N \to \infty$ for fixed $p$. By the harmonic-arithmetic identity~\eqref{eq:harm-arith}:
\[
\Cov_{\mathrm{arith}}(a_p, \Omega \mid L(1) \in [c, c+\eta],\, N) = \frac{C(c)}{\sqrt{p}} + o(1)
\]
where $C(c)$ is the value of the signal in the independent Sato--Tate model. This completes the proof of \Cref{thm:C}. \qed

\section{Discussion}\label{sec:discussion}

\subsection{The Tamagawa anatomy}\label{subsec:tamagawa}

The following observations are empirical, not consequences of \Cref{thm:C}; the theorem targets $\Omega_f$ directly through the adjoint $L$-value, not through the BSD decomposition $\Omega = L(1)/(\prod c_v \cdot |\Sha| \cdot |T|^2)$. They show that the mechanism is a \emph{Tamagawa mechanism}: the covariance is concentrated entirely in the local bad-place arithmetic, and $|\Sha|$ plays no independent role.

At 10-decile $L(1)$-conditioning, ${\approx}\, 80\%$ of the $\Omega$-$a_p$ signal flows through $1/\!\prod c_v$. At finer conditioning the ratio approaches $100\%$: the Tamagawa-to-$\Omega$ signal ratio is $1.003$ at 50 bins and $1.004$ at 100 bins, converging to unity as guaranteed by $\Omega = L(1)/\prod c_v$ at $|\Sha| = |T| = 1$.

A sharper test uses the second-order adjoint response $R_2^{(\Omega)}(p) = -2p[(p+1)^2 + a_p^2]/((p+1)^2 - a_p^2)^2$, which weights primes by the squared adjoint Euler factor denominator. Using 267,000 rank-0 Cremona curves with five-fold cross-validation grouped by conductor: adding $R_2^{(\Omega)}$ to a full 25-prime linear $a_p$ basis improves the MSE for $\log \prod c_v$ by $4.9\%$ ($\Delta R^2 = +0.046$, $t = +117.7$); adding it to a full linear-plus-quadratic $a_p$ basis still improves by $1.2\%$. For $\log|\Sha|$, the same additions improve by $0.01\%$ and $0.06\%$---noise. Isogeny-class grouping gives the same split ($+3.2\%$ for Tamagawa, $+0.07\%$ for~$|\Sha|$). The nonlinear denominator $(p+1)^2 - a_p^2 = \#E(\FF_p)(p+2+a_p)$ amplifies primes where $E$ has extreme local behavior, which is also the information Tamagawa numbers encode; the cross-validation confirms this content is genuine.

Over function fields~\cite{Wac26b}, Tamagawa numbers are the \emph{only} source of intra-type variation. Over $\QQ$, the conditional Sato--Tate mechanism creates correlations between good-place traces and bad-place invariants that target $\prod c_v$, not $|\Sha|$. The $|\Sha|$-modulation from~\cite{Wac26a} is explained by the BSD identity: $|\Sha|$ and $\prod c_v$ covary at fixed $L(1)$, so stratifying by $|\Sha|$ implicitly stratifies by Tamagawa structure. The mechanism targets local factors; $|\Sha|$ is a proxy.

\subsection{Connection to Katz--Sarnak}

The conditional Sato--Tate mechanism operates within the Katz--Sarnak framework: the equidistribution of Satake parameters to independent Sato--Tate (in families, as $N \to \infty$) is the universal law, and the $L$-value conditioning creates structured deviations from this law. \Cref{thm:C} makes this precise: the conditional moments equal the independent-ST prediction plus an error that vanishes with~$N$.

The sign change in $C(c)$ reflects the geometry of the conditioning set. At small~$c$, the product constraint $\prod L_\ell = c$ favors balanced $\theta_\ell$ (near $\pi/2$), creating a positive covariance. At large~$c$, the constraint favors extreme $\theta_\ell$ (near~$0$), where $\cos\theta$ and $\cos 2\theta = 2\cos^2\theta - 1$ are both large but the regression on $L(f,1)$ absorbs the linear contribution, leaving a negative residual from the quadratic term.

\subsection{The shape mismatch}\label{subsec:shape-mismatch}

The CFKRS model correctly predicts the overall magnitude ($+0.091$ predicted vs.\ $+0.097$ observed) and the sign flip, but the decile-level shape match is weak (\Cref{fig:cfkrs}): the Pearson correlation between predicted and observed $\Corr(a_3, 1/\!\prod c_v)$ across 10 $L(1)$ deciles is only $r = +0.25$. The model gets the qualitative pattern right (monotone decrease from positive to negative) but the magnitudes differ by up to an order of magnitude at individual deciles---the CFKRS model predicts $+0.30$ at decile~1 while the observed is $+0.024$. The density-weighted average hides this because errors cancel.

This mismatch is informative. The independent-ST model assumes the Satake parameters $\theta_2, \theta_3, \theta_5, \ldots$ are independent. \Cref{thm:C} proves the \emph{unconditional} joint distribution converges to independent ST. But the \emph{conditional} distribution (given $L(f,1) \approx c$) need not converge to the conditional distribution under independent ST---the conditioning can amplify small inter-prime correlations that are negligible unconditionally. The $r = +0.25$ is evidence that such correlations exist and matter at the decile level, even though they wash out in the density-weighted average.

\begin{figure}[h]
\centering
\includegraphics[width=0.95\textwidth]{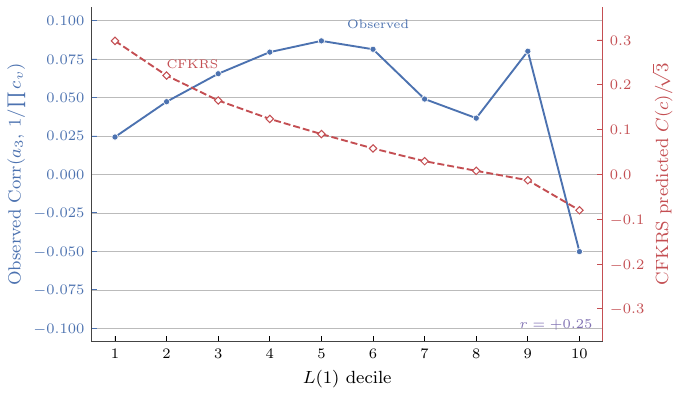}
\caption{CFKRS predicted $C(c)/\!\sqrt{3}$ (dashed, right axis) vs.\ observed $\Corr(a_3, 1/\!\prod c_v)$ (solid, left axis) by $L(1)$ decile. Both curves decrease from positive to negative, confirming the sign-flip prediction, but the observed non-monotonicity at deciles 7--9 limits the shape correlation to $r = +0.25$.}\label{fig:cfkrs}
\end{figure}

\subsection{Open problems}

\begin{enumerate}[label=(\arabic*)]
\item \textbf{Shape convergence.} A systematic study of $C_P(c) \to C(c)$ as the number of model primes $P \to \infty$ would clarify whether the decile-level discrepancy (\S\ref{subsec:shape-mismatch}) is due to finite-$P$ truncation or to genuine inter-prime effects absent from the model.

\item \textbf{Tamagawa structure at higher $|\Sha|$.} The signal at $|\Sha| = 4$ is suggestive but underpowered (\S\ref{sec:empirical}). Since the mechanism is Tamagawa (\S\ref{subsec:tamagawa}), the relevant question is how the Tamagawa distribution shifts across $|\Sha|$~strata. With extended databases (conductor up to $10^6$), a definitive decomposition becomes feasible. The BSD identity predicts the signal strength scales with the covariance between $|\Sha|$ and $\prod c_v$ within each $L(1)$~bin.

\item \textbf{Function field quasi-polynomials.} The type counts $N_\lambda(q)$ in~\S\ref{subsec:type-counts} are quasi-polynomials with period dividing~$96$, verified at 14~primes. The exact formulas $N_{\Phi_2^2} = q$ and $N_{\Phi_2^3} = q(q-7)/4$ (with a period-96 correction at $q \equiv 31 \pmod{96}$) are well-supported. A proof---likely from the Grothendieck--Lefschetz trace formula applied to the parameter space of curves with fixed type---would make the Kronecker obstruction fully constructive.

\item \textbf{Explicit constants in \Cref{thm:C}.} The proof in \Cref{sec:mechanism} uses only unconditional ingredients (Shimura, Petersson, Weil). A fully detailed writeup of the quadrilinear Petersson bound with explicit constants would make the rate of convergence quantitative.

\item \textbf{Euler-factor Bessel inequality.} \Cref{thm:B} proves $\Cov_\lambda(x, x^2) > 0$ for the linear tilt $e^{\lambda x}$, and \Cref{prop:euler-large-p} extends this to the Euler-factor tilt for $p$ sufficiently large. The analogous statement for all fixed~$p$---that $\Cov_\lambda(g(x), x^2) > 0$ with $g(x) = -\log(1 - 2x/\sqrt{p} + 1/p)$---is verified numerically for $p \leq 97$ (\Cref{rem:euler-factor}). The barrier reduces to $\Cov_\lambda((\log h)^2, x^2) > 0$---an inequality between two U-shaped functions under a right-tilted semicircle that neither FKG (the measure is not log-concave) nor the original Bessel technique resolves directly.
\end{enumerate}

\appendix
\section{The Bessel barrier}\label{sec:bessel}

\subsection{Statement}

Let $\mu_\lambda$ denote the exponentially tilted Wigner semicircle on $[-1,1]$:
\[
\mu_\lambda(dx) = \frac{1}{Z(\lambda)} \sqrt{1-x^2}\, e^{\lambda x}\, dx, \qquad Z(\lambda) = \frac{\pi I_1(\lambda)}{\lambda},
\]
where $I_\nu$ denotes the modified Bessel function of the first kind. In Sato--Tate coordinates $x = \cos\theta$, this is a tilted Sato--Tate measure.

\medskip\noindent
\textbf{\Cref{thm:B}.} \emph{For all $\lambda > 0$:}
\[
\Cov_{\mu_\lambda}(x, x^2) > 0.
\]
Since $\cos 2\theta = 2\cos^2\theta - 1 = 2x^2 - 1$, this is equivalent to $\Cov_{\mu_\lambda}(\cos\theta, \cos 2\theta) > 0$.

\subsection{Proof}

\textbf{Step 1.} (Bessel representation.) By the integral representation
\[
I_\nu(z) = \frac{(z/2)^\nu}{\sqrt{\pi}\,\Gamma(\nu + \tfrac{1}{2})} \int_{-1}^{1} (1-t^2)^{\nu - 1/2} e^{zt}\, dt,
\]
applied at $\nu = 1$ and $\nu = 2$, together with the decomposition $x^2\sqrt{1-x^2} = \sqrt{1-x^2} - (1-x^2)^{3/2}$, we obtain
\begin{equation}\label{eq:E-x2-bessel}
\mathbb{E}_\lambda[x^2] = 1 - \frac{3\, I_2(\lambda)}{\lambda\, I_1(\lambda)}.
\end{equation}

\textbf{Step 2.} (Score function identity.) Since $\{\mu_\lambda\}_\lambda$ is a one-parameter exponential family with sufficient statistic $x$ and cumulant generating function $K(\lambda) = \log Z(\lambda) - \log Z(0)$, the identity
\[
\Cov_{\mu_\lambda}(x, g(x)) = \frac{d}{d\lambda} \mathbb{E}_\lambda[g(x)]
\]
holds for any measurable~$g$. Taking $g(x) = x^2$:
\begin{equation}\label{eq:score}
\Cov_{\mu_\lambda}(x, x^2) = \frac{d}{d\lambda} \mathbb{E}_\lambda[x^2].
\end{equation}

\textbf{Step 3.} (Reduction to Bessel inequality.) Substituting~\eqref{eq:E-x2-bessel} and computing $\frac{d}{d\lambda}\mathbb{E}_\lambda[x^2]$ using the recurrence $I_0(z) = I_2(z) + 2I_1(z)/z$, the positivity $\frac{d}{d\lambda}\mathbb{E}_\lambda[x^2] > 0$ reduces to
\begin{equation}\label{eq:bessel-ineq}
\left(\frac{I_0(z)}{I_1(z)}\right)^{\!2} > 1 + \frac{4}{z^2} \qquad \text{for all } z > 0.
\end{equation}

We give the algebraic details. Setting $R_2 = I_2/I_1$ and using $I_1' = I_0 - I_1/z$, $I_2' = I_1 - 2I_2/z$:
\[
\frac{d}{dz}\!\left[\frac{R_2}{z}\right] = \frac{z R_2' - R_2}{z^2}, \qquad R_2' = 1 - \frac{R_2}{z} - R_2 \cdot \frac{I_0}{I_1}.
\]
The condition $z R_2' < R_2$ (equivalently $\frac{d}{dz}[R_2/z] < 0$, which gives $\frac{d}{d\lambda}\mathbb{E}[x^2] > 0$) reduces via the recurrence $I_0/I_1 = R_2 + 2/z$ to $z(1 - R_2^2) < 4R_2$. Substituting $R_2 = I_0/I_1 - 2/z$ and simplifying yields~\eqref{eq:bessel-ineq}.

\textbf{Step 4.} (Barrier argument.) Define
\[
\varphi(z) = z^2\!\left[\left(\frac{I_0(z)}{I_1(z)}\right)^{\!2} - 1\right] - 4.
\]

\textbf{(a)} Taylor expansion: $I_0(z)/I_1(z) = 2/z + z/4 + O(z^3)$, so $\varphi(z) = z^4/16 + O(z^6) > 0$ for small $z > 0$.

\textbf{(b)} At any $z_0 > 0$ with $\varphi(z_0) = 0$: set $R_0 = \sqrt{1 + 4/z_0^2}$ and $t = 2/z_0$. Using the Bessel recurrence $R'(z) = 1 - R(z)^2 + R(z)/z$ where $R = I_0/I_1$:
\[
\varphi'(z_0) = 2z_0\!\left(1 + 2t^2 - 2t\sqrt{1+t^2}\right).
\]
The algebraic identity
\begin{equation}\label{eq:perfect-square}
1 + 2t^2 - 2t\sqrt{1+t^2} = \left(\sqrt{1+t^2} - t\right)^{\!2}
\end{equation}
shows $\varphi'(z_0) > 0$, since $\sqrt{1+t^2} > t$ for all $t \geq 0$.

\textbf{(c)} If $z_0 = \inf\{z > 0 : \varphi(z) = 0\}$ existed, then $\varphi > 0$ on $(0, z_0)$ and $\varphi(z_0) = 0$, forcing $\varphi'(z_0) \leq 0$. This contradicts~(b). Therefore $\varphi(z) > 0$ for all $z > 0$. \qed

\subsection{Remarks}

\begin{remark}\label{rem:turan}
The inequality~\eqref{eq:bessel-ineq} is related to, but does not follow from, the classical Tur\'an-type inequality $I_\nu(z)^2 > I_{\nu-1}(z) I_{\nu+1}(z)$, which gives $(I_0/I_1)^2 > I_{-1}/I_1 \cdot I_0/I_2$ but not the sharp bound~\eqref{eq:bessel-ineq}.
\end{remark}

\begin{remark}\label{rem:taylor}
The leading-order Taylor expansion $\Cov_{\mu_\lambda}(x, x^2) = \lambda/16 + O(\lambda^3)$ shows the covariance is linear in $\lambda$ for weak tilts. The coefficient $1/16$ arises from the semicircle moments: $\Cov(x, x^2) = \mathbb{E}[x^3] - \mathbb{E}[x]\mathbb{E}[x^2]$, which at first order in $\lambda$ equals $\mu_4 \lambda - \mu_2 \cdot \mu_2 \lambda = (1/8 - 1/16)\lambda = \lambda/16$, where $\mu_{2k} = C_k/4^k$ are the Catalan moments of the semicircle.
\end{remark}

\begin{remark}\label{rem:large-lambda}
For large $\lambda$, the tilted measure concentrates near $x = 1$ and $\Cov(x, x^2) \sim 3/(2\lambda^2) \to 0^+$. The covariance is positive for all $\lambda > 0$, approaching zero at both extremes.
\end{remark}

\begin{theorem}[Euler-factor tilt, large-$p$ regime]\label{prop:euler-large-p}
For any fixed $\lambda > 0$, there exists $p_0(\lambda)$ such that for all primes $p \geq p_0$:
\[
\alpha_p(\lambda) := \Cov_{\mu_\lambda^{(p)}}(g_p(x),\, x^2) > 0,
\]
where $\mu_\lambda^{(p)}(dx) \propto \sqrt{1-x^2}\cdot(1 - 2x/\sqrt{p} + 1/p)^{-\lambda}\, dx$ is the Euler-factor tilted semicircle and $g_p(x) = -\log(1 - 2x/\sqrt{p} + 1/p)$.
\end{theorem}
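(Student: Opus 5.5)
The plan is a second-order perturbation expansion in the small parameter $p^{-1/2}$, with $\lambda$ held fixed. The mechanism is that for fixed $\lambda$ the Euler-factor tilt $e^{\lambda g_p(x)}$ is a weak tilt as $p \to \infty$, so $\alpha_p(\lambda)$ should be computable to leading order from semicircle moments alone, in the spirit of \Cref{rem:taylor}. The leading term should come out strictly positive and of exact order $1/p$. All errors should be $O_\lambda(p^{-3/2})$ uniformly in $x \in [-1,1]$, so positivity follows for $p \geq p_0(\lambda)$.

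\textbf{Step 1 (expansion of $g_p$).} Write $u = 2x/\sqrt{p} - 1/p$. For $p \geq 5$ we have $|u| \leq 2/\sqrt{5} + 1/5 < 1$ on $[-1,1]$, so $g_p = -\log(1-u) = u + u^2/2 + O(u^3)$ uniformly. This gives
\[
g_p(x) = \frac{2x}{\sqrt{p}} + \frac{2x^2 - 1}{p} + r_p(x), \qquad \sup_{[-1,1]} |r_p| = O(p^{-3/2}).
\]
In particular $\sup|g_p| = O(p^{-1/2})$, so $e^{\lambda g_p} = 1 + \lambda g_p + O_\lambda(p^{-1})$ uniformly, and the same holds for the density of $\mu_\lambda^{(p)}$ relative to the semicircle $\mu_0$ after normalization.

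\textbf{Step 2 (expansion of the covariance).} Split $\alpha_p = \frac{2}{\sqrt{p}}\Cov_{\mu_\lambda^{(p)}}(x, x^2) + \frac{2}{p}\Var_{\mu_\lambda^{(p)}}(x^2) + \Cov_{\mu_\lambda^{(p)}}(r_p, x^2)$. The constant $-1/p$ drops out of the covariance.

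The third term is $O(p^{-3/2})$, since $|r_p| = O(p^{-3/2})$ and $|x^2| \leq 1$.

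For the second term, the measure is within $O(p^{-1/2})$ of $\mu_0$ in total variation. Hence $\Var(x^2) = \Var_{\mu_0}(x^2) + O(p^{-1/2})$, where $\Var_{\mu_0}(x^2) = \mu_4 - \mu_2^2 = 1/8 - 1/16 = 1/16$ by the Catalan moments.

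For the first term, expand the tilt to first order: $d\mu_\lambda^{(p)}/d\mu_0 = 1 + \lambda(g_p - \mathbb{E}_0 g_p) + O_\lambda(p^{-1})$. This yields $\Cov(x,x^2) = \Cov_0(x,x^2) + \lambda\,\mathbb{E}_0[(g_p - \mathbb{E}_0 g_p)(x - \mathbb{E}_0 x)(x^2 - \mathbb{E}_0 x^2)] + O_\lambda(p^{-1})$. Here $\Cov_0(x,x^2) = 0$ by symmetry. Inserting $g_p = 2x/\sqrt{p} + O(p^{-1})$, the middle term becomes $\frac{2\lambda}{\sqrt{p}}(\mu_4 - \mu_2^2) + O_\lambda(p^{-1}) = \frac{\lambda}{8\sqrt{p}} + O_\lambda(p^{-1})$. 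This matches \Cref{rem:taylor} with effective linear tilt $2\lambda/\sqrt{p}$.

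\textbf{Step 3 (assembly).} Combining the three pieces,
\[
\alpha_p(\lambda) = \frac{2}{\sqrt p}\cdot\frac{\lambda}{8\sqrt p} + \frac{2}{p}\cdot\frac{1}{16} + O_\lambda(p^{-3/2}) = \frac{2\lambda + 1}{8p} + O_\lambda(p^{-3/2}).
\]
This is strictly positive once $p \geq p_0(\lambda)$. Both leading contributions are positive, which is consistent with, and a perturbative analogue of, \Cref{thm:B}.

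The main obstacle is bookkeeping: making every remainder uniform on $[-1,1]$ and tracking the order correctly. Specifically, the second-order part of the tilt (of size $\lambda/p$) multiplies the prefactor $2/\sqrt{p}$ and therefore only contributes $O(p^{-3/2})$. Also, the normalization $Z$ changes by $1 + O(p^{-1/2})$ and must be kept inside the first-order correction. I would handle all of this by writing the density ratio exactly as $e^{\lambda g_p}/\mathbb{E}_0[e^{\lambda g_p}]$ and using $|e^{y} - 1 - y| \leq y^2 e^{|y|}$ with $|y| \leq \lambda\sup|g_p|$. The constant in $O_\lambda$ grows with $\lambda$, which is why $p_0$ depends on $\lambda$.
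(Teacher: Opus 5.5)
Your proposal is correct, and it is not the paper's argument: it does the computation that the paper's sketch skips. The paper also perturbs in $t = p^{-1/2}$. However, it asserts $\alpha_p(\lambda) = 2t\,\Cov_{\mu_\lambda}(x,x^2) + O(t^2)$ and invokes \Cref{thm:B} at the fixed parameter $\lambda$. That treats $\mu_\lambda^{(p)}$ as the linear tilt $\mu_\lambda$. As you observe, $(1-2tx+t^2)^{-\lambda} = e^{2\lambda t x + O(t^2)}$, so $\mu_\lambda^{(p)}$ is close to the linear tilt with parameter $2\lambda t \to 0$.

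The true first-order term is therefore $2t\,\Cov_{\mu_{2\lambda t}}(x,x^2) \approx \lambda t^2/4$. This is the same order as the $O(t^2)$ remainder the paper discards. The quadratic part $(2x^2-1)t^2$ of $g_p$ adds a further $2t^2\operatorname{Var}_{\mu_0}(x^2) = t^2/8$ at that same order. Positivity therefore hinges on the $t^2$ coefficient, and your expansion $\alpha_p(\lambda) = (2\lambda+1)/(8p) + O_\lambda(p^{-3/2})$ is exactly the needed computation.

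Your route uses only the semicircle moments $\mu_2 = 1/4$, $\mu_4 = 1/8$ (the small-$\lambda$ content of \Cref{rem:taylor}), not the global Bessel inequality of \Cref{thm:B}. It also yields more than the paper claims:
\begin{itemize}
\item an explicit leading constant;
\item positivity even at $\lambda = 0$, where in fact $\alpha_p(0) = 1/(8p) - 1/(16p^2)$ exactly, via the Chebyshev expansion $g_p = \sum_{k\ge1} 2t^k\cos(k\theta)/k$;
\item a threshold $p_0$ that can be taken uniform for $\lambda$ in compact subsets of $[0,\infty)$.
\end{itemize}

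Two small fixes. First, $2/\sqrt5 + 1/5 \approx 1.09$, so $|u|<1$ fails at $p=5$; it holds for $p \ge 7$, and you only need large $p$ anyway. Second, the total-variation and variance errors in Step~2 should read $O_\lambda(p^{-1/2})$ rather than $O(p^{-1/2})$.
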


\begin{proof}
Write $t = 1/\sqrt{p}$. As $t \to 0$, we have $g_p(x) = 2tx + O(t^2)$ uniformly on $[-1,1]$ and $(1 - 2tx + t^2)^{-\lambda} = e^{\lambda \cdot 2tx + O(t^2)}$. The covariance $\alpha_p(\lambda)$ is a continuous function of~$t$ (the integrands are smooth on the compact set $[-1,1] \times [0, t_0]$), and at $t = 0$: $\alpha_p(\lambda)|_{t=0} = 2t \cdot \Cov_{\mu_\lambda}(x, x^2) + O(t^2)$, which is positive for small~$t$ by \Cref{thm:B}.
\end{proof}

\begin{theorem}[Euler-factor tilt, fixed $p$]\label{rem:euler-factor}
For fixed~$p$, the barrier technique of Step~4 extends. Writing $h(x) = 1 - 2tx + t^2$ and $\varphi(\lambda) = \Cov_\lambda(\log h, x^2)$, one checks $\varphi(0) < 0$ by symmetry (since $h(x)\cdot h(-x) = (1+t^2)^2 - 4t^2 x^2$ is a decreasing function of~$x^2$, the even part of $\log h$ is negatively correlated with~$x^2$ under the semicircle). At a hypothetical first zero $\lambda_0 > 0$, the computation of Step~4 yields
\[
\varphi'(\lambda_0) = -\Cov_{\lambda_0}\!\bigl((\log h)^2,\, x^2\bigr),
\]
so the barrier holds if $(\log h)^2$ and $x^2$ are positively correlated. Both are U-shaped---large at the endpoints of~$[-1,1]$---and the tilted measure ($\lambda_0 > 0$) concentrates mass near $x = 1$, where their large values coincide. We verify $\alpha_p > 0$ numerically for all $p \leq 97$. The general analytical inequality---$\Cov_\lambda((\log h)^2, x^2) > 0$ for all $\lambda > 0$ and $t \in (0,1)$---is open; the tilted measure is not log-concave, so the FKG inequality does not apply.
\end{theorem}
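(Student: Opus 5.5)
The plan is to run the barrier argument of Step~4 in the appendix, but with the Euler-factor family in place of the linear tilt, and to isolate the single inequality that is not proved analytically. Write $t = 1/\sqrt{p}$, $h(x) = 1 - 2tx + t^2$ and $g = -\log h$. The measure $\mu_\lambda^{(p)} \propto \sqrt{1-x^2}\, e^{\lambda g(x)}\,dx$ is a one-parameter exponential family with sufficient statistic $g$. As in Step~2 of the proof of \Cref{thm:B}, this gives $\frac{d}{d\lambda}\mathbb{E}_\lambda[F] = \Cov_\lambda(g,F)$ for every bounded $F$; differentiation under the integral is justified because $g$ is smooth on $[-1,1]$ for $t<1$. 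Set $\varphi(\lambda) = \Cov_\lambda(\log h, x^2) = -\alpha_p(\lambda)$. The target is then $\varphi(\lambda) < 0$ for all $\lambda > 0$.

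\textbf{Initial condition.} At $\lambda = 0$ the measure is the symmetric semicircle, so only the even part of $\log h$ contributes to the covariance with $x^2$. That even part is $\tfrac12 \log\bigl(h(x)h(-x)\bigr) = \tfrac12\log\bigl((1+t^2)^2 - 4t^2x^2\bigr)$, which is a strictly decreasing function of $x^2$. The covariance of a strictly decreasing function of $x^2$ with $x^2$ itself is negative whenever $x^2$ is nondegenerate (Chebyshev's sum inequality), so $\varphi(0) < 0$.

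\textbf{Derivative at a zero.} Differentiating $\varphi = \mathbb{E}[(\log h) x^2] - \mathbb{E}[\log h]\,\mathbb{E}[x^2]$ with the score identity gives
\[
\varphi'(\lambda) = -\Bigl(\Cov(\log h,(\log h)x^2) - \mathbb{E}[x^2]\operatorname{Var}(\log h) - \mathbb{E}[\log h]\,\Cov(\log h,x^2)\Bigr).
\]
At a first zero $\lambda_0>0$ of $\varphi$ we have $\mathbb{E}[(\log h)x^2] = \mathbb{E}[\log h]\,\mathbb{E}[x^2]$. Substituting this, the bracket collapses to $\mathbb{E}[(\log h)^2x^2] - \mathbb{E}[(\log h)^2]\,\mathbb{E}[x^2]$, which yields the displayed formula $\varphi'(\lambda_0) = -\Cov_{\lambda_0}\bigl((\log h)^2, x^2\bigr)$.

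\textbf{Barrier step.} Suppose $\Cov_{\lambda_0}\bigl((\log h)^2, x^2\bigr) > 0$. Then $\varphi'(\lambda_0) < 0$, while $\varphi < 0$ on $[0,\lambda_0)$ together with $\varphi(\lambda_0)=0$ forces $\varphi'(\lambda_0) \ge 0$. This is a contradiction, as in part~(c) of Step~4, so $\alpha_p > 0$ for all $\lambda > 0$.

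\textbf{Main obstacle.} The hard step is exactly this positivity of $\Cov_\lambda\bigl((\log h)^2,x^2\bigr)$. Both functions are U-shaped, but their minima sit at different points: $x^2$ is minimal at $0$, while $(\log h)^2$ is minimal where $h=1$, namely at $x = t/2$. Chebyshev's inequality therefore applies only away from the interval between these two minima, and FKG is unavailable because the tilted measure is not log-concave.

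First, I would split $[-1,1]$ at $x=t/2$ and pair monotone pieces. For $\lambda>0$ the mass shifts toward $x=1$, where both functions increase together. The leftover negative contribution comes from $x\in(0,t/2)$, and there it is $O(t^4)$. I expect this to close for small $t$, which is the regime of \Cref{prop:euler-large-p}.

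For the remaining small primes, where a uniform analytic bound is out of reach, I would settle for certified numerics. The quantity $\alpha_p(\lambda)$ is a ratio of one-dimensional integrals of analytic functions, so it can be evaluated on a $\lambda$-grid with rigorous quadrature error bounds. For large $\lambda$, the Laplace asymptotics at $x=1$ (the analogue of \Cref{rem:large-lambda}) handle the tail of the $\lambda$-range. This covers $p \le 97$. The general inequality for all $t\in(0,1)$ would remain open, as stated.
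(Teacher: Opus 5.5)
Your proposal is correct and follows the paper's own argument: the score identity for the Euler-factor family $\mu_\lambda^{(p)}$, the even-part argument for $\varphi(0)<0$, and the collapse of $\varphi'(\lambda_0)$ to $-\Cov_{\lambda_0}\bigl((\log h)^2,x^2\bigr)$ at a first zero, which you derive explicitly where the paper only asserts it; from there you run the same barrier contradiction and rely on numerics for $p\le 97$, leaving the general inequality open. One small correction to your small-$t$ heuristic: discordant pairs are not confined to $(0,t/2)$, since pairs $x<0<y$ with roughly $0<x+y<t$ are also discordant (e.g.\ $x=-0.3$, $y=0.5$ at $p=3$); this band still contributes only at higher order in $t$ than the main term $4t^2\operatorname{Var}_\lambda(x^2)$, so your expectation that the argument closes for small $t$ is unaffected.
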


\section{Type count quasi-polynomials}\label{sec:appendix-types}

{\small
\subsection{Explicit formulas}\label{subsec:type-counts}

For the family $E_D\colon y^2 = x^3 + x + D(t)$ over $\FF_q(t)$ with $D$ monic squarefree of degree~3, there are five cyclotomic types at $q \equiv 7 \pmod{12}$:

\begin{center}
\renewcommand{\arraystretch}{1.2}
\begin{tabular}{@{}lcccccc@{}}
\toprule
Type $\lambda$ & $L$-deg & $\tilde{L}(1)$ & $c_\infty$ & $\prod c_v^{\mathrm{fin}}$ & $|\Sha|$ & $N_\lambda(q)$ \\
\midrule
$\Phi_2^2$ & 2 & 4 & 4 & 1 & 1 & $q$ \\
$\Phi_2^3$ & 3 & 8 & 4 & 2 & 1 & $q(q-7)/4$\rlap{$^{\dagger}$} \\
$\Phi_2^4$ & 4 & 16 & 4 & 4 & 1 & \text{see~\Cref{tab:type-counts}} \\
$\Phi_4^2$ & 4 & 4 & 4 & 1 & 1 & \text{see~\Cref{tab:type-counts}} \\
$\Phi_2^2 \cdot \Phi_6$ & 4 & 4 & 4 & 1 & 1 & \text{see~\Cref{tab:type-counts}} \\
\bottomrule
\end{tabular}
\end{center}

\noindent
Here $\tilde{L}(1) = \prod_j \Phi_{n_j}(1)$ and $|\Sha| = \tilde{L}(1)/(c_\infty \cdot \prod c_v^{\mathrm{fin}})$. For $\Phi_2^3$, every curve has one split $\mathrm{I}_2$ place ($c_v = 2$); for $\Phi_2^4$, $\prod c_v^{\mathrm{fin}} = 4$ absorbs the larger numerator. All five types have $|\Sha| = 1$. The formula $N_{\Phi_2^2}(q) = q$ is exact at all 14~primes. The formula $N_{\Phi_2^3}(q) = q(q-7)/4$ holds at $13/14$ primes$^{\dagger}$; the exception is $q = 127$, where $N/q = 27$ instead of~$30$. This is consistent with a period-96 quasi-polynomial: within the class $q \equiv 31 \pmod{96}$, $N_{\Phi_2^3}/q = 3k^2 + 18k + 6$ where $k = (q-31)/96$.

The three degree-4 types ($\Phi_2^4$, $\Phi_4^2$, $\Phi_2^2 \cdot \Phi_6$) are quasi-polynomials with period dividing~$96$. Within each residue class modulo~$96$, $N_\lambda/q$ is quadratic in $k = (q - r)/96$. We verify $q \mid N_\lambda(q)$ at all 14~primes and observe that $\Phi_2^2 \cdot \Phi_6$ is the dominant type, comprising ${\sim}71\%$ of all curves at large~$q$.

\vspace{-2pt}
\begin{center}
\captionof{table}{$N_\lambda(q)/q$ for $\deg D = 3$, $y^2 = x^3 + x + D(t)$, $q \equiv 7 \pmod{12}$}
\label{tab:type-counts}
\renewcommand{\arraystretch}{1.05}
\begin{tabular}{rr|rrrrr}
\toprule
$q$ & $q \bmod 96$ & $\Phi_2^2$ & $\Phi_2^3$ & $\Phi_2^4$ & $\Phi_4^2$ & $\Phi_2^2\Phi_6$ \\
\midrule
7 & 7 & 1 & 0 & 0 & 2 & 4 \\
19 & 19 & 1 & 3 & 0 & 23 & 58 \\
31 & 31 & 1 & 6 & 1 & 56 & 141 \\
43 & 43 & 1 & 9 & 4 & 107 & 291 \\
67 & 67 & 1 & 15 & 15 & 287 & 742 \\
79 & 79 & 1 & 18 & 21 & 380 & 994 \\
103 & 7 & 1 & 24 & 42 & 668 & 1768 \\
127 & 31 & 1 & 27 & 67 & 1004 & 2673 \\
139 & 43 & 1 & 33 & 81 & 1205 & 3172 \\
151 & 55 & 1 & 36 & 93 & 1388 & 3694 \\
163 & 67 & 1 & 39 & 114 & 1619 & 4324 \\
199 & 7 & 1 & 48 & 177 & 2492 & 6592 \\
211 & 19 & 1 & 51 & 198 & 2753 & 7312 \\
223 & 31 & 1 & 54 & 229 & 3122 & 8265 \\
\bottomrule
\end{tabular}
\end{center}

\vspace{-2pt}
\begin{center}
\includegraphics[width=0.88\textwidth]{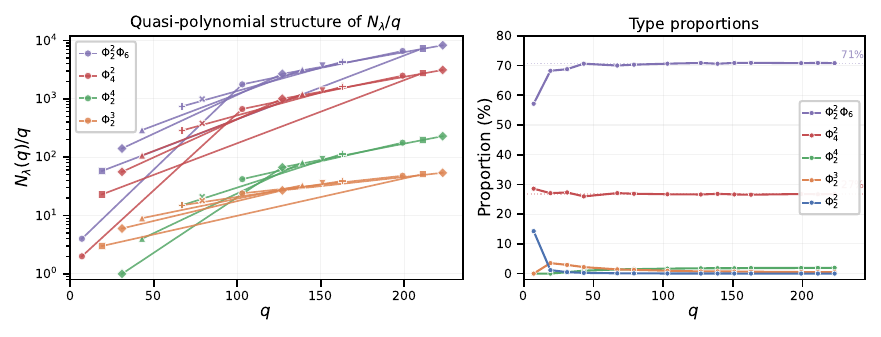}
\vspace{-4pt}
\captionof{figure}{Left: $N_\lambda(q)/q$ on a log scale; marker shapes distinguish residue classes mod~$96$, revealing the quasi-polynomial branching. Right: type proportions converge to ${\sim}71\%$ ($\Phi_2^2\Phi_6$) and ${\sim}27\%$ ($\Phi_4^2$).}
\label{fig:type-proportions}
\end{center}

\vspace{-4pt}
\begin{center}
\includegraphics[width=0.78\textwidth]{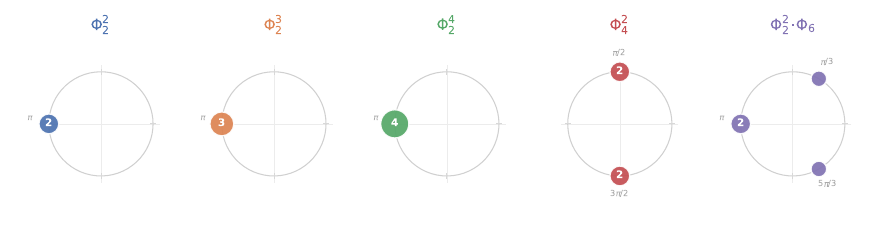}
\vspace{-6pt}
\captionof{figure}{Unitarized reciprocal roots on the unit circle for each cyclotomic type. Numbers inside markers indicate multiplicity. All roots are roots of unity, as forced by the Kronecker obstruction (\Cref{thm:A}).}
\label{fig:root-diagram}
\end{center}

\vspace{-4pt}
\begin{center}
\includegraphics[width=0.75\textwidth]{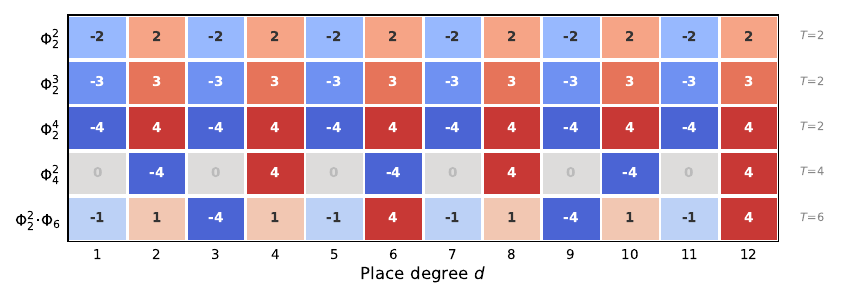}
\vspace{-4pt}
\captionof{figure}{Trace power sums $p_d(\lambda) = \sum_j \alpha_j^d$ by type and place degree. The period~$T$ equals the LCM of the orders of each type's roots of unity. The dominant type $\Phi_2^2 \cdot \Phi_6$ (${\sim}71\%$ of curves) has $T = 6$, producing the non-trivial degree dependence in the murmuration formula.}
\label{fig:trace-heatmap}
\end{center}
}

\medskip

\begingroup
\footnotesize
\setlength{\itemsep}{0pt}
\setlength{\parsep}{0pt}

\endgroup

\end{document}